\newcommand{\nwc}{\newcommand}
\nwc{\aaa}{\mathcal{F}}
\nwc{\aab}{\bar{\mathfrak{a}}}
\nwc{\aal}{\mathcal{F}'}
\nwc{\aap}{\mathcal{F}_{P}}
\nwc{\bbb}{\mathfrak{b}}
\nwc{\bbp}{\mathfrak{b}_{P}}
\nwc{\be}{\begin{equation}}
\nwc{\bea}{\begin{eqnarray}}
\nwc{\beq}{$$}
\nwc{\C}{\;\mbox{{\sf I}}\!\!\!C}
\nwc{\cb}{\overline{C}}
\nwc{\ccc}{\mathcal{C}}
\nwc{\cin}{\textbf{(v)}}
\nwc{\cl}{C'}
\nwc{\cp}{\mathcal{C}_{P}}
\nwc{\cpll}{\mathcal{C}_{P'}}
\nwc{\ct}{\tilde{C}}
\nwc{\dd}{\mathcal{L}}
\nwc{\ddd}{\mathfrak{d}}
\nwc{\ddl}{\mathcal{L}'}
\nwc{\dlp}{\delta_{P}}
\nwc{\doi}{\textbf{(ii)}}
\nwc{\ee}{\end{equation}}
\nwc{\eea}{\end{eqnarray}}
\nwc{\enq}{$$}
\nwc{\gon}{{\rm gon}}
\nwc{\gtl}{\tilde{g}}
\nwc{\gud}{g^{1}_{2}}
\nwc{\gtu}{g^{1}_{3}}
\nwc{\hhza}{H^{0}(C,\mathfrak{a})}
\nwc{\hua}{h^{1}(C,\mathfrak{a})}
\nwc{\hza}{h^{0}(C,\mathfrak{a})}
\nwc{\kk}{{\rm K}}
\nwc{\lbd}{\lambda}
\nwc{\lif}{L_{\infty}}
\nwc{\mm}{\mathfrak{m}}
\nwc{\mmp}{\mathfrak{m}_{P}}
\nwc{\mpd}{{\mathfrak{m}_{P}}^{2}}
\nwc{\N}{I\!\!N}
\nwc{\nn}{\mathbb{N}}
\nwc{\obp}{\overline{\mathcal{O}}_P}
\nwc{\ocbux}{\oo _{\bar{C}}\langle 1,x\rangle}
\nwc{\oclux}{\oo _{C'}\langle 1,x\rangle}
\nwc{\ocux}{\oo _{C}\langle 1,x\rangle}
\nwc{\ol}{\mathcal{O}'}
\nwc{\oma}{\Omega (\mathfrak{a})}
\nwc{\omo}{\Omega (\mathcal{O})}
\nwc{\oo}{\mathcal{O}}
\nwc{\op}{\mathcal{O}_P}
\nwc{\opc}{\mathcal{O}_{P,C}}
\nwc{\oph}{\widehat{\mathcal{O}}_{P}}
\nwc{\opl}{\mathcal{O}_{P}'}
\nwc{\oplc}{\mathcal{O}_{P,C}'}
\nwc{\opll}{\mathcal{O}_{P'}}
\nwc{\opt}{\tilde{\mathcal{O}}_{P}}
\nwc{\optt}{{\mathcal{O}}_{\tilde{P}}}
\nwc{\oq}{\mathcal{O}_{Q}}
\nwc{\oqt}{\tilde{\mathcal{O}}_{Q}}
\nwc{\ot}{\tilde{\mathcal{O}}}
\nwc{\overop}{\bar{\oo}_{P}}
\nwc{\pb}{\overline{P}}
\nwc{\pgmd}{\mathbb{P}^{g+2}}
\nwc{\pgmu}{\mathbb{P}^{g+1}}
\nwc{\pp}{\mathbb{P}}
\nwc{\prv}{\noindent\textbf{Proof}:}
\nwc{\pt}{\tilde{P}}
\nwc{\ptl}{\tilde{P}}
\nwc{\pum}{\mathbb{P}^{1}}
\nwc{\carta}{\mathfrak{U}}
\nwc{\Q}{\;\mbox{{\sf I}}\!\!\!Q}
\nwc{\qtl}{\tilde{Q}}
\nwc{\qua}{\textbf{(iv)}}
\nwc{\R}{I\!\!R}
\nwc{\sep}{\beq\ast\ \ast\ \ast\enq}
\nwc{\spl}{{S_{P}}'}
\nwc{\spll}{S_{P'}}
\nwc{\ssp}{{\rm S}_{P}}
\nwc{\sss}{{\rm S}}
\nwc{\sys}{\mathcal{L}}
\nwc{\tre}{\textbf{(iii)}}
\nwc{\um}{\textbf{(i)}}
\nwc{\vlp}{\mathcal{V}_{\lambda,P}}
\nwc{\vpt}{v_{\ptl}}
\nwc{\vv}{\mathcal{W}}
\nwc{\vvp}{\mathcal{W}_{P}}
\nwc{\wol}{\ww\cdot\mathcal{O}'}
\nwc{\wpn}{{\omega _{P}}^{n}}
\nwc{\ww}{\omega}
\nwc{\wwp}{\omega _{P}}
\nwc{\Z}{{Z\!\!\!Z}}
\nwc{\zz}{\mathbb{Z}}
\newcommand{\Pic}{\operatorname{Pic}}
\newtheorem{coro}{Corollary}[section]
\newtheorem{defi}[coro]{Definition}
\newtheorem{lema}[coro]{Lemma}
\newtheorem{prop}[coro]{Proposition}
\newtheorem{rem}[coro]{Remark}
\newtheorem{teo}[coro]{Theorem}
\begin{document}

\title{Curves with Canonical Models on Scrolls}

\author{Danielle Lara}
\address{Departamento de Matem\'atica, UFV / CAF,
Rodovia LMG 818 km 06,
35690-000 Florestal MG, Brazil}
\email{danilara@ufv.br}

\author{Simone Marchesi}
\address{Instituto de Matem\'atica, Estat\'istica e Computa\c c\~ao Cient\'ifica, UNICAMP, 
Rua S\'ergio Buarque de Holanda 651,
Distr. Bar\~ao Geraldo,
13083-859 Campinas SP, Brazil}
\email{marchesi@ime.unicamp.br }

\author{Renato Vidal Martins}
\address{Departamento de Matem\'atica, Instituto de Ci\^encias Exatas, UFMG,
Av. Ant\^onio Carlos 6627,
30123-970 Belo Horizonte MG, Brazil}
\email{renato@mat.ufmg.br}

\subjclass{Primary 14H20, 14H45, 14H51}

\keywords{non-Gorenstein curve, canonical model, trigonal non-Gorenstein curve, scrolls}

\begin{abstract}
Let  $C$ be an integral and projective curve whose canonical model $C'$ lies on a rational normal scroll $S$ of dimension $n$. We mainly study some properties on $C$, such as gonality and the kind of singularities, in the case where $n=2$ and $C$ is non-Gorenstein, and in the case where $n=3$, the scroll $S$ is smooth, and $C'$ is a local complete intersection inside $S$. We also prove that a rational monomial curve with just one singular point lies on a surface scroll  if and only if its gonality is at most $3$, and that it lies on a threefold scroll if and only if its gonality is at most $4$.
\end{abstract}

\maketitle


\section*{Introduction}

In the late 1910's, F. Enriques proved in \cite{En} that a nonhyperelliptic smooth canonical curve is the set theoretic intersection of hyperquadrics, unless it is trigonal or isomorphic to a plane quintic. Nearly two decades later, this result was also showed by D. W. Babbage in  \cite{B}. In the early 1920's, K. Petri obtained a thourough description of the canonical ideal in terms of equations. As presented in 
\cite[p. 131]{ACGH}, the so called ``Petri's analysys of the canonical ideal" follows Enriques' division into cases, and is based on M. Noether's dimension counts on the glogal sections of the canonical sheaf \cite{N}. So these statements on a canonical curve are sometimes known as Noether-Enriques-Babbage-Petri Theorem.

It can be read off from them a well known geometric characterization of trigonality: a nonhyperelliptic smooth integral and projective curve is trigonal if and only if it is isomorphic to a canonical curve which lies on a nonsingular two-dimensional rational normal scroll; moreover, the $g_3^1$ that computes the gonality of the curve is cut out by the ruling of the scroll.

Actually, the same statement above holds for (possibly singular) Gorenstein curves as well, just observing that the dualizing sheaf of any such a curve is a bundle. So, later on, in \cite{RS}, R. Rosa and K.-O. St\"ohr devoted their study to the case where the scroll is singular, i.e., a cone. It turned out that they got similar results which perfectly matched the known ones. The additional hypothesis they made was that the linear series could admit non-removable base points. This was because canonical (Gorenstein) curves lying on a cone necessarily pass through the vertex and the ruling of the cone cuts out a $g_3^1$. In other words, the vertex cannot be removed, otherwise the curve would be hyperelliptic.

A way of formalizing linear series with non-removable base points appears in M. Coppens \cite{Cp}. In \cite{S1}, St\" ohr established a manner of dealing with this condition, still keeping the language of divisors. Essentialy, one is allowing torsion free sheaves of rank $1$, rather than bundles, regarding linear systems; and replacing morphisms by pencils, concerning gonality. This discussion is more precise in Definition \ref{defsis}.

In \cite{M1} and \cite{M2}, the topic was led to the case where the curve needs not to be Gorenstein and, in particular, there is no canonical way of embedding it. So given any such a curve $C$, the study was made by means of its canonical model $C'$ defined in terms of the global sections of the dualizing sheaf of $C$ (Definition \ref{defcan}). It was noted that if $C$ is trigonal then $C'$ lies on a two-dimensional rational normal scroll (a \emph{surface scroll} for short), and some properties on $C$ were derived from this.

The point of departure of this article is precisely the converse of the above assertion, that is, assume $C$ is non-Gorenstein and $C'$ lies on a surface scroll $S$, then what can be said about $C$? In order to answer the question it is crucial to distinguish some important properties on non-Gorenstein curves to take into account. In \cite[Thms. 5.10, 6.5]{KM}, S. L. Kleiman along with the third named author characterized, by local invariants, curves for which the canonical model is projectively normal and arithmetically normal, called, respectively, \emph{nearly Gorenstein} and \emph{nearly normal}. So this turns into our roadmap, that is, to check when these conditions hold according to the intersection number $\ell$ of $C'$ and a fiber of $S$. The results we got are summarized in Theorem \ref{thmtwo}. Compared to \cite{M1} and \cite{M2}, we give refined statements with less hypothesis, i.e., just assuming that $C'\subset S$.

The second task was trying to find examples of non-Gorenstein curves with canonical models on surface scrolls, though not trigonal. As it is usual, we dealt with rational monomial curves and, instead, we got the following result: the canonical model of any such a curve lies on a surface scroll if and only if the curve is trigonal (Theorem \ref{scroll2}). The proof involves some combinatorics on semigroup of values. We took advantage from the arguments we used to do a little step forward into a potential moduli problem. The question is when nonhyperelliptic curves happen to be characterized by their canonical models up to isomorphism. We obtain this statement for any curve with at most one non-Gorenstein point, which is unibranch and monomial (Theorem \ref{thmmod}).

As in the case of trigonal curves, F.-O. Schreyer proved in \cite{Sc} that any nonsingular tetragonal curve $C$ can be canonically embedded on a smooth three-dimensional rational normal scroll (a \emph{threefold scroll} for short) $S$ as a complete intersection of two surfaces inside it. Again, the arguments extend naturally to Gorenstein curves and, again, we do here a quite similar reverse engeneering: we assume $C$ is non-Gorenstein, $C'\subset S$ as a local complete intersection, and analyze when or if the main properties on $C$ hold according to the invariant $\ell$ (Theorem \ref{thmtre}). As it happens in the surface scroll case, we also prove that the canonical model of a rational monomial curve lies on a threefold scroll if and only if the curve is at most tetragonal (Theorem \ref{thmsc3}).

\

\noindent{\bf Acknowledgments.} Part of this work can be found in the first named author's Ph. D. thesis \cite{L}. The second named author is partially supported by FAPESP grant numbers 07481-1/2012 and 19676-7/2014. The third named author is partially supported by CNPq grant number 307978/2012-5.


\section{Preliminaries}

For the remainder, a \emph{curve} is an integral and complete one-dimensional scheme over an algebraically closed ground field. So Let $C$ be a curve of (arithmetic) genus $g$ with structure sheaf $\oo_C$, or simply $\oo$. Let $\pi :\overline{C}\rightarrow C$ be the normalization map. Set $\overline{\oo}:=\pi _{*}(\oo _{\overline{C}})$ and call $\ccc:=\mathcal{H}\text{om}(\overline{\oo},\oo)$, the conductor of $\overline{\oo}$ into $\oo$. Let also $\ww_C$, or simply $\ww$, denote the dualizing sheaf of $C$. A point $P\in C$ is said to be \emph{Gorenstein} if $\ww_P$ is a free $\oo_P$-module. The curve is said to be \emph{Gorenstein} if all of its points are so, or equivalently, $\ww$ is invertible. It is said to be \emph{hyperelliptic} if there is a morphism $C\rightarrow\pp^1$ of degree $2$.

\begin{defi} 
\label{defsis}
\emph{A \emph{linear system of dimension $r$ in $C$} is a set of the form
$$
{\rm L}:=\dd(\aaa ,V):=\{x^{-1}\aaa\ |\ x\in V\setminus 0\}
$$
where $\mathcal{F}$ is a coherent fractional ideal sheaf on $C$ and $V$ is a vector subspace of $H^{0}(\aaa )$ of dimension $r+1$. The \emph{degree} of the linear system is the integer
$$
d:=\deg \aaa :=\chi (\aaa )-\chi (\oo)
$$
Note, in particular, that if $\oo\subset\aaa$ then
$$
\deg\aaa=\sum_{P\in C}\dim(\aaa_P/\op).
$$
The notation $g_{d}^{r}$ stands for ``linear system of degree $d$ and dimension $r$". The linear system is said to be \emph{complete} if $V=H^0(\aaa)$, in this case one simply writes ${\rm L}=|\aaa|$. The \emph{gonality} of $C$ is the
smallest $d$ for which there exists a $g_{d}^{1}$ in $C$, or equivalently, a torsion free sheaf $\aaa$ of rank $1$ on $C$ with degree $d$ and $h^0(\aaa)\geq 2$. A point $P\in C$ is called a \emph{base point of ${\rm L}$} if $x\op\subsetneq\aaa_P$ for every $x\in V$. A base point is called \emph{removable} if it is not a base point of $\dd(\oo\langle V\rangle,V)$, where $\oo\langle V\rangle$ is the subsheaf of the constant sheaf of rational functions generated by all sections in $V\subset k(C)$. So $P$ is a non-removable base point of ${\rm L}$ if and only if $\aaa_P$ is not a free $\op$-module; in particular, $P$ is singular if so.}
\end{defi}

Given any integral scheme $A$, any map $\varphi :A\to C$ and a sheaf $\mathcal{G}$ on $C$, set
$$\mathcal{O}_A\mathcal{G}:= \varphi^*\mathcal G/\text{Torsion}(\alpha^*\mathcal G).$$

Given any coherent sheaf $\mathcal{F}$ on $C$ set $\mathcal{F}^n:=\text{Sym}^n\mathcal{F}/\text{Torsion}(\text{Sym}^n\mathcal{F})$. If $\mathcal{F}$ is invertible then clearly $\mathcal{F}^{n}=\mathcal{F}^{\otimes n}$.

\begin{defi}
\label{defcan}
\emph{Call $\widehat{C}:=\text{Proj}(\oplus\,\ww ^n)$ the blowup of $C$ along $\ww$. If $\widehat{\pi} :\widehat{C}\rightarrow C$ is the natural morphism, set $\widehat{\oo}=\widehat{\pi}_*(\oo_{\widehat{C}})$ and $\widehat{\oo}\ww:=\widehat{\pi}_*(\oo _{\widehat{C}}\ww)$. In \cite[p\,188\,top]{R} Rosenlicht showed that the linear system $\sys(\oo_{\overline{C}}\ww,H^0(\ww))$ is base point free. He considered then the morphism $\kappa :\overline{C}\rightarrow\pp^{g-1}$ induced by it and called $C':=\kappa(C)$ the \emph{canonical model} of $C$. He also proved in \cite[Thm\,17]{R} that if $C$ is nonhyperelliptic, the map
$\pi :\overline{C}\rightarrow C$ factors through a map $\pi' : C'\rightarrow C$. So set $\oo':=\pi'_*(\oo_{C'})$ in this case. In \cite[Dfn\,4.9]{KM} one finds another characterization of $C'$. It is the image of the morphism $\widehat{\kappa}:\widehat{C}\rightarrow\pp^{g-1}$ defined by the linear system $\sys(\oo_{\widehat{C}}\ww,H^0(\ww))$. By Rosenlicht's Theorem, since $\ww$ is generated by global sections, we have that $\widehat{\kappa}:\widehat{C}\rightarrow C'$ is an isomorphism if $C$ is nonhyperelliptic.}
\end{defi}

Now set $\overline{\oo}\ww:=\pi_*(\oo_{\overline{C}}\ww)$ and take $\lambda\in H^0(\ww)$ such that $(\overline{\oo}\ww)_P=\overline{\oo}_P\lambda$ for every singular point $P\in C$. Such a differential exists because $H^0(\ww)$ generates $\overline{\oo}\ww$ as proved in \cite[p\,188 top]{R}, and because the singular points of $C$ are of finite number and $k$ is infinite since it is algebraically closed. Set
\begin{equation}
\label{equvvv}
\vv=\vv_{\lambda}:=\ww/\lambda
\end{equation}
If so, we have
$$
\ccc_P\subset
\mathcal{O}_P \subset \vvp\subset\oph=\op'\subset\obp
$$
for every singular point $P\in C$, where the equality makes sense if and only if $C$ is nonhyperelliptic.

\begin{defi} \label{defnng}
\emph{Let $P\in C$ be any point. Set
$$
\eta_P:=\dim(\vvp/\op)\ \ \ \ \ \ \ \ \ \ \ \mu_P:=\dim({\widehat{\oo}_{P}}/\vvp)
$$
and also
$$
\eta:=\sum_{P\in C}\eta_P\ \ \ \ \ \ \ \ \ \ \mu:=\sum_{P\in C}\mu_P
$$
In particular, letting $g'$ be the genus of $C'$, we have
$$
g=g'+\eta+\mu
$$
Following \cite[Prps. 20, 21, 28]{BF}, we call $P$ \emph{Kunz} if $\eta_P=1$ and,
accordingly, we say that $C$ is \emph{Kunz} if all of its non-Gorenstein points are Kunz; we call $P$ \emph{almost Gorenstein} if $\eta_P=1$ and,
accordingly, we say that $C$ is \emph{almost Gorenstein} if all of its points are so. Following \cite[Dfn. 5.7]{KM}, we call $C$ \emph{nearly Gorenstein} if  $\mu=1$, i.e., $C$ is almost Gorenstein with just one non-Gorenstein point. Finally, following \cite[Dfn. 2.15]{KM}, we call $C$ \emph{nearly normal} if $h^0(\oo/\mathcal{C})=1$.}
\end{defi}

 \begin{rem}
\label{remrel}
\emph{The relevance of the concepts above are summarized below:
\begin{itemize}
\item[(i)] $C$ is nearly Gorenstein if and only if it is non-Gorenstein and $C'$ is projectively normal, owing to \cite[Thm. 6.5]{KM}. 
\item[(ii)] $C$  is nearly normal if and only if $C'$ is arithmetically normal, due to \cite[Thm. 5.10]{KM}.
\item[(iii)] $P$ is Gorenstein if and only if $\eta_P=\mu_P=0$, and $P$ is non-Gorenstein if and only if $\eta_P,\mu_P>0$ by \cite[p. 438 top]{BF}. Besides, if $\eta_P=1$ then $\mu_P=1$, by \cite[Prp. 21]{BF}. In particular, a Kunz curve with only one non-Gorenstein point is as close to being Gorenstein as it gets.
\end{itemize}}
\end{rem}

Now we establish few notations on evaluations. Given a unibranch point $P\in C$ and any function $x\in k(C)^*$, set
$$
v_{P}(x):=v_{\pb}(x)\in\zz
$$
where $\pb$ is the point of $\cb$ over $P$. The \emph{semigroup of values} of $P$ is
$$
\sss=\ssp:=v_{P}(\op ).
$$
We also feature two elements of $\sss$, namely:
\begin{equation}
\label{equaab}
\alpha=\alpha_P :={\rm min}(\sss\setminus\{ 0\})\ \ \ \ \text{and}\ \ \ \beta=\beta_P :={\rm min}(v_P(\cp)).
\end{equation}
For later use we set
\begin{equation}
\label{equsss}
\sss^*=\sss_P^*:=\{a\in S\,|\, a\leq\beta\}
\end{equation}
$$
\delta=\delta_P:=\nn\setminus\sss
$$
which agrees with the singularity degree of $P$, that is, $\delta=\dim(\obp/\op)$. The \emph{Frobenius vector} of $\sss$ is $\gamma :=\beta -1$ and one sets
\begin{equation}
\label{equkkp}
\kk=\kk_{P}:=\{ a\in\zz\ |\ \gamma -a\not\in\sss\}
\end{equation}
whose importance will appear later on. We also set, in a bit different way
\begin{equation}
\label{equsss}
\kk^*=\kk_P^*:=\{a\in\sss\,|\, a<\beta\}.
\end{equation}

\begin{defi}
\label{defmon}
\emph{Let $P\in C$. One defines the \emph{multiplicity} of $P$ by
$$
m_C(P)=\dim(\obp/\mmp\obp),
$$
so $P$ is singular if and only if its multiplicity is at least $2$. We say that a unibranch $P$ is \emph{monomial} provided that the completion $\widehat{\op} =k[[t^{n_{1}},\ldots\,,t^{n_r}]]$,
where $t$ is a local parameter at $\pb$.}
\end{defi}

\subsection{Curves on Scrolls}

Following, for instance, \cite{EH,Rd}, a \emph{rational normal scroll} $S:=S_{m_1,\ldots,m_d}\subset\pp^{N}$ with, say, $m_1\leq\ldots\leq m_d$,  is a projective variety of dimensional $d$ which, after a suitable choice of coordinates, is the set of points $(x_0:\ldots: x_N)\subset\mathbb{P}^N$ such that  the rank of
$$
\bigg(
\begin{array}{cccc}
x_0 & x_1 & \ldots & x_{m_1-1} \\
x_1 & x_2 & \ldots & x_{m_1}
\end{array}
\begin{array}{c}
\big{|} \\
\big{|}
\end{array}
\begin{array}{ccc}
x_{m_1+1} & \ldots &  x_{m_1+m_2} \\
x_{m_1+2} & \ldots &  x_{m_1+m_2+1}
\end{array}
\begin{array}{c}
\big{|} \\
\big{|}
\end{array}
\begin{array}{c}
\ldots    \\
\ldots  
\end{array}
\begin{array}{c}
\big{|} \\
\big{|}
\end{array}
\begin{array}{cc}
\ldots & x_{N-1}  \\
\ldots & x_N
\end{array}
\bigg)
$$
is smaller than 2. So, in particular,
\begin{equation}
\label{equnnn}
N=e+d-1
\end{equation}
where $e:=m_1+\ldots+m_d$

Note that $S$ is the disjoint union of $(d-1)$-planes determined by a (parametrized) choice of a point in each of the $d$ rational normal curves of degree $m_d$ lying on complementary spaces on $\mathbb{P}^N$. We will refer to any of these $(d-1)$-planes as a \emph{fiber}. So $S$ is smooth if $m_i>0$ for all $i\in\{1,\ldots,d\}$. From this geometric description one may see that
\begin{equation}
\label{equdgs}
\deg(S)=e
\end{equation}
The scroll $S$ can also naturally be seen as the image of a projective bundle. In fact,  taking $\mathcal{E}:=\oo_{\pum}(m_1)\oplus\ldots\oplus\oo_{\pum}(m_d)$, one has a birational morphism  
\begin{equation*}
 \mathbb{P}(\mathcal{E})\longrightarrow S\subset\mathbb{P}^{N}
\end{equation*}
defined by $\oo_{\mathbb{P}(\mathcal{E})}(1)$.  The morphism is such that any fiber of $\mathbb{P}(\mathcal{E})\to\pum$ is sent to a fiber of $S$. It is an isomorphism if $S$ is smooth. In this case, one may describe the Picard group of the scroll as 
$$
\text{Pic}(S)=\mathbb{Z}H\oplus\mathbb{Z}F
$$
where $F$ is the class of a fiber, and $H$ is the hyperplane class. And one may also compute its Chow ring as
\begin{equation}
\label{equchw}
A(S)=\frac{\mathbb{Z}[H,F]}{(F^2\, ,\, H^{d+1}\, ,\, H^{d}F\, ,\, H^{d}-eH^{d-1}F)}
\end{equation}
From (\ref{equdgs}) we get the relations
\begin{equation}
\label{equrel}
H^d=e\ \ \ \ \ \text{and}\ \ \ \ \ H^{d-1}F=1
\end{equation}
The canonical class in $S$ is given by
\begin{equation}
\label{equccs}
K_S=-dH+(e-2)F
\end{equation}
By \cite[Lem. 3.1, Cor. 3.2]{Mr}, we also have the formulae
\begin{equation}
\label{equhhz}
h^0(\oo_S(aH+bF))=
\begin{cases}
\displaystyle (b+1)\binom{a+d-1}{d-1}+e\binom{a+d-1}{d}
&
{\rm if}\  a\geq 0\ \text{and}\ b\geq-am_1
\\
0 & \text{otherwise} 
\end{cases}
\end{equation}
and
\begin{equation}
\label{equhh1}
h^i(\oo_S(aH+bF))=0\ \ \ \ \ \text{if}\ i\geq 1,\  a\geq 0\ \text{and}\ b\geq -(am_1+1)
\end{equation}
Now let $X$ be a nondegenerate curve lying on $S$. We set the parameter
\begin{equation}
\label{equlll}
\ell:=X\cdot F
\end{equation}
which will be widely studied here. If the curve is a complete intersection inside $S$ written as
$$
X=(a_1H+b_1F)\cdot\ldots\cdot(a_{d-1}H+b_{d-1}F)
$$
then clearly
\begin{equation}
\label{equcll}
\ell=a_1\cdot\ldots\cdot a_{d-1}
\end{equation}
and one may also use (\ref{equchw}) and (\ref{equrel}) to compute $X\cdot H$ and get
\begin{equation}
\label{equcdd}
\deg(X)=a_1\cdot\ldots\cdot a_{d-1}\cdot e+\sum_{i=1}^{d-1}a_1\cdot\ldots\cdot a_{i-1}\cdot b_{i}\cdot a_{i+1}\cdot\ldots\cdot a_{d-1}
\end{equation}


\section{Canonical Models on Surface Scrolls}
\label{sec222}

In this section we study curves for which the canonical model lies on a two-dimensional rational normal scroll, a \emph{surface scroll} for short. So, compared to the previous section, we are in the case where the scroll $S=S_{mn}\subset\mathbb{P}^N$ and $d=2$. If the dimension $N$ of the ambient space is fixed, one may simply write $S=S_{m}$.

If $m>0$ then $S$ is smooth. Let $X=aH+bF\in{\rm Pic}(S)$ be a curve on the scroll. One may use its resolution
$$
0 \longrightarrow \oo_S(-aH-bF) \longrightarrow \oo_S \longrightarrow \oo_X \longrightarrow 0
$$
to compute the arithmetic genus $p_a(X)$ of the curve. In fact,
\begin{align}
\label{equpa2}
p_a(X) &= 1 -\chi(\oo_X)\\
& = 1 -\chi(\oo_S) + \chi(\oo_S(-aH-bF)) = \chi(\oo_S(-aH-bF)) \nonumber
\end{align}
For an arbitrary $D \in \Pic(S)$, by Hirzebruch-Riemann-Roch Theorem we have
\begin{align}
\label{equeul}
\chi(\oo_S(D)) & = \deg\left({\rm ch}(\oo_S(D))({\rm td}(\mathcal{T}_S)\right)_2 \\
& = \frac{1}{12}(c_1^2-c_2)+\frac{1}{2}c_1D+\frac{1}{2}D^2 \nonumber
\end{align}
where $c_i$ is the $i$-th Chern class of the tangent bundle $\mathcal{T}_S$. Now, using  (\ref{equccs}), we have that $c_1=-K_S=2H +(2-e)F$. On the other hand, $\chi(\oo_S)=1$ and, by (\ref{equhhz}) and (\ref{equhh1}), we have that $\chi(\oo_S(H))=e+2$. So, by (\ref{equeul}), and the relations (\ref{equrel}) one may solve a system to get $c_2=4HF$. Writing $D=hH+fF$, and replacing $c_1$ and $c_2$ in (\ref{equeul}) we have
\begin{equation}
\label{equhf2}
\chi(\oo_S(D))= 1+h+f+hf+\frac{he}{2}+\frac{h^2e}{2}
\end{equation}
which is the general formula for a cycle. If the cycle corresponds to a curve, then (\ref{equhf2}) could have been obtained directly from (\ref{equhhz}) and (\ref{equhh1}). In any rate, the genus of the curve derives from (\ref{equpa2}) and (\ref{equhf2}), that is,
$$
p_a(X)= 1-a-b+ab-\frac{ae}{2}+\frac{a^2e}{2}
$$
Now $e=N-1$ by (\ref{equnnn}), $a=\ell$ by (\ref{equcll}), and $b=\deg(X)-\ell(N-1)$ by (\ref{equcdd}). Hence
\begin{equation}
\label{equscr} 
2p_{a}(X)-2=(2\ell-2)\deg(X)-(N-1)\ell^2+(N-3)\ell
\end{equation}
which agrees with \cite[p 66 bot]{SV}, obtained with no explicit use of the Euler characteristic.

If $m=0$, then $S$ is a cone. In this case, \cite[Frm. 3.1]{RS} computes the Hilbert function of the curve to get
\begin{equation}
\label{equcon}
p_{a}(X)=(q-1)(\deg(C)-1)-\frac{1}{2}q(q-1)(N-1)
\end{equation}
where $q=\lceil \deg(X)/(N-1)\rceil$.

It can be read off from \cite{M1, M2, RS, SV} that the canonical model of any trigonal curve lies on a (possibly degenerated) surface scroll. Actually, for Gorenstein curves, this characterizes trigonality. In this section we do precisely the converse, that is, establishing conditions for a non-Gorenstein curve to have its canonical model lying on a surface scroll. We will assume that all curves have genus at least $4$, otherwise the canonical models lie on a plane which is certainly a surface scroll. The results we get are the following.

\begin{teo}
\label{thmtwo}
Let $C$ be a non-Gorenstein curve with $g\geq4$ such that its canonical model $C'$
lies on the scroll $S_{m}$. Consider the standard affine plane chart 
$$
U:=\{(1:x:x^2: \ldots : x^{g-m-2}:y:yx:yx^2:\ldots:yx^m)\,|\, (x,y)\in\mathbb{A}^2\}\cong \mathbb{A}^2
$$
on $S_m$, and assume $C'$ is given by the equation
$$
c_{\ell}(x)y^{\ell}+\ldots+c_{1}(x)y+c_{0}(x)=0\ \ \ \
c_{\ell}(x)\neq 0
$$
on $U$. Then the following hold:
\begin{itemize}
\item[(i)] $\ell\leq3$ if $m>0$ and $\ell\leq 2$ if $m=0$.
\item[(ii)] if $m>0$, then $\ell$ agrees with the generic number of points in the intersection of $C'$ and a member of a pencil of lines on $S_m$. 
\item[(iii)] if $m=0$, then the generic number of points in the intersection of $C'$ and a line on $S_m$ is at most $3$. If equality holds, then $C$ is Kunz with just one non-Gorenstein point;
\item[(iv)] if $\ell=1$ then $C$ is rational, and if $m>0$, then $C'\cong\pum$, in particular, the singular points of $C$ are non-Gorenstein;
\item[(v)] if $\ell=2$ then:
\begin{itemize}
\item[(a)] if $m>0$, then $C$ is nearly Gorenstein;
\item[(b)] if $m=0$, then $g-g'\leq 3$; in particular, $C$ is nearly Gorenstein;
\end{itemize}
\item[(vi)] if $\ell=3$ then:
\begin{itemize}
\item[(a)] $C$ is almost Gorenstein if and only if it is Kunz;
\item[(b)] $m\geq (g-3)/3$ and equality holds if and only if $C$ is Kunz with just one non-Gorenstein point;
\end{itemize}
\item[(vii)] if $m=0$ and $g-g'=3$ then:
\begin{itemize}
\item[(a)] $\cl$ does not meet the vertex if and only if it is hyperelliptic;
\item[(b)] $\cl$ meets the vertex if and only if it is nearly normal;
\item[(c)] ${\rm gon}(C)\leq 5$
\end{itemize}
\item[(viii)] ${\rm gon}(C)\leq {\rm gon}(C')+g-g'$;
\item[(ix)] if $m>0$ then ${\rm gon}(C')\leq\ell$
\end{itemize}
\end{teo}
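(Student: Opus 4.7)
The plan is to exhibit a $g^{1}_{\ell}$ on $C'$ arising from the pencil of fibers of the scroll. Since $m>0$, the scroll $S_{m}$ is smooth with $\Pic(S_{m})=\zz H\oplus\zz F$, and (\ref{equhhz}) gives $h^{0}(\oo_{S_{m}}(F))=2$. I set $\aaa:=\oo_{S_{m}}(F)|_{C'}$, which is invertible on $C'$ and hence torsion-free of rank one. Writing $C'\sim aH+bF$ as a divisor on the surface $S_{m}$, the intersection relations (\ref{equrel}) give $a=C'\cdot F=\ell$, so $\deg\aaa=C'\cdot F=\ell$.

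The main point is to verify $h^{0}(\aaa)\geq 2$. From the resolution
$$
0\longrightarrow\oo_{S_{m}}(F-C')\longrightarrow\oo_{S_{m}}(F)\longrightarrow\aaa\longrightarrow 0
$$
and the fact that $F-C'\sim-\ell H+(1-b)F$ has negative $H$-coefficient, formula (\ref{equhhz}) yields $H^{0}(\oo_{S_{m}}(F-C'))=0$. Thus the restriction map $H^{0}(\oo_{S_{m}}(F))\hookrightarrow H^{0}(\aaa)$ is injective, giving $h^{0}(\aaa)\geq 2$. Taking $V$ to be the two-dimensional image of $H^{0}(\oo_{S_{m}}(F))$ in $H^{0}(\aaa)$, the pair $\dd(\aaa,V)$ is a linear system of degree $\ell$ and dimension $1$ on $C'$ in the sense of Definition \ref{defsis}, whence $\gon(C')\leq\ell$.

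I do not anticipate any real obstacle: the only subtlety worth checking is that $\aaa$ is genuinely torsion-free of rank one on the possibly singular curve $C'$, which is automatic since it is the restriction of an invertible sheaf to an integral closed subscheme.
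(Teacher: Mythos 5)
Your argument establishes only item (ix) of the theorem. For that item it is correct, and in fact more careful than the paper's own justification, which simply asserts that for $m>0$ the ruling of $S_m$ cuts out a $g^1_\ell$ on $C'$: your restriction sequence $0\to\oo_{S_m}(F-C')\to\oo_{S_m}(F)\to\aaa\to 0$, together with the vanishing $h^0(\oo_{S_m}(aH+bF))=0$ for $a<0$ from (\ref{equhhz}), makes the injectivity of $H^0(\oo_{S_m}(F))\to H^0(\aaa)$ explicit, and the identification $\deg\aaa=C'\cdot F=\ell$ via (\ref{equrel}) is right.

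The genuine gap is that the statement has nine parts and you prove one of them. Everything in (i)--(viii) rests on a circle of ideas your proposal never enters: the identity $\deg(C')=2g-2-\eta$ from \cite[Prp. 2.14]{KM}, the genus formula (\ref{equscr}) for a curve of class $\ell H+bF$ on the smooth scroll (and the Hilbert-function formula (\ref{equcon}) when $m=0$ and $S_m$ is a cone), which combine to give
$$
g'=(\ell-1)\Bigl(\bigl(1-\tfrac{g}{2}\bigr)\ell+(2g-\eta-3)\Bigr),
$$
from which the bound $\ell\leq 3$ in (i), the case analysis in (iv)--(vi) (for instance, $\ell=2$ and $m>0$ force $\mu=1$, i.e.\ $C$ nearly Gorenstein), and the assertions about the vertex of the cone in (iii) and (vii) are extracted. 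Item (viii) requires yet another argument, pushing a gonality pencil $\oo_{C'}\langle 1,x\rangle$ forward along $\pi':C'\to C$ and comparing degrees via $\deg_{C}\pi'_*(\aal)=\deg_{C'}\aal+g-g'$. So as it stands the proposal cannot be accepted as a proof of the theorem; it is a correct and nicely rigorous proof of part (ix) only.
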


\begin{proof}
Set $S:=S_m$. We start by proving (ii), i.e., that we are just extending the definition of $\ell$ from smooth scrolls to cones. In fact, by construction, the vertical lines ``$x=c$" on $U$ correspond to fibers of a pencil on $S$ and hence $\ell$ agrees with the one defined in (\ref{equlll}) if $m>0$, so the item follows.  

Let us prove (i). According to \cite[Prp. 2.14]{KM}, the degree of $C'$ in
$\pp ^{g-1}$ can be expressed as
\begin{equation}
\label{equdcl}
\deg(C')=2g-2-\eta
\end{equation}
On the other hand, following \cite{RS,SV}, the degree of $C'$ can also be computed in terms of its equation in the chart $U$, that is, \begin{equation}
\label{grauC} \deg(C')=r+\ell m
\end{equation}
where $r$ is the smallest integer such that
\begin{equation}
\label{graucurv2} \deg c_{i}(x)\leq r-i(g-2-2m)\ \ \ \ \ \ \ i\in\{
0,1,\ldots ,\ell\}
\end{equation}

Assume $m>0$. Applying (\ref{equscr}) to $C'\subset\mathbb{P}^{g-1}$ and using (\ref{equdcl}) we get
\begin{equation}
\label{equmsc}
g'=(\ell-1)\biggl( \biggl( 1-\frac{g}{2} \biggr)
\ell\ +\ (2g-\eta -3)\biggr)
\end{equation}
Note that the second factor at the right hand side of the equality is a decreasing linear function of $\ell$ with root $(4g-2\eta
-6)/(g-2)\leq (4g-8)/(g-2)= 4$. Since $g'\geq 0$, it follows that $\ell\leq 4$. If $\ell=4$ we have $\eta=1$ and $g'=0$, but $\eta =1$ implies $C$ has a unique non-Gorenstein point $P$ with $\eta _{P}=1$ and this yields $\mu _{P}=1$. Therefore $g=g'+\eta+\mu=0+1+1=2$ which cannot happen since we are assuming $g\geq 4$.

Now assume $m=0$. From (\ref{grauC}) and (\ref{graucurv2}) we get $2g-2-\eta-\ell(g-2)\geq 0$, then
\begin{equation}
\label{equgrc}
\ell\leq 1+\frac{g-\eta}{g-2}
\end{equation}
so $\ell\leq 3$ and equality holds if and only if $\eta=1$ and $g=3$, which is off from our assumption on the genus. Therefore (i) is proved.

Let us prove (iii). If $\ell=1$, then a fiber meets $C'$ at most twice, since it meets the curve at most one time in $U$. If $\ell=2$, the fiber can possibly meet $C'$ three times if the latter passes through the vertex. But $C'$ meets the vertex if and only if $\deg c_{2}(x)\geq 1$.  On the other hand, from  (\ref{grauC}) and (\ref{graucurv2}) we have that $\deg c_2(x)\leq 2-\eta$, so this can only happen if $\eta=1$, that is, $C$ is Kunz, with just one non-Gorenstein point.

Now let us suppose $\ell=1$. Then, clearly, $C'$ is rational and so is $C$. If $m>0$, then by (\ref{equmsc}) we have $g'=0$, i.e., $C'\cong\pp ^{1}$. In particular, $C'$ is nonsingular and hence the singular points of $C$ must be non-Gorenstein. So (iv) is proved.

To prove (v), assume $\ell=2$. If $m>0$ we have by (\ref{equmsc}) that $g-g'=\eta+1$, then $\mu=1$ and, by definition, $C$ is nearly Gorenstein, so (a) follows. If $m=0$, by (\ref{equgrc}), we have $\eta\leq 2$. If one applies (\ref{equcon}) to $\cl$ one gets
\begin{equation}
\label{macete}
g'=(q-1)(2g-\eta-3)-\frac{1}{2}q(q-1)(g-2)
\end{equation}
with
\begin{equation}
\label{equqqq}
q=\lceil 1+(g-\eta)/(g-2)\rceil.
\end{equation}
Since $\eta=1$ or $2$, then $q=2$ or $3$. If $q=2$ we have by (\ref{macete}) that $g'-g=\eta+1$ so $g-g'\leq 3$. While if $q=3$ we have by (\ref{equqqq}) that $g=3$ and $\eta=1$. In particular, $g-g'\leq 3$ as well and (b) follows.

To prove (vi), assume $\ell=3$. We have by (\ref{equmsc}) that $g-g'=2\eta$ so clearly $C$ is almost Gorenstein if and only if it is Kunz. Now combining (\ref{grauC}) and (\ref{graucurv2}) for $\ell=3$, one gets
$$
2g-2-\eta-3m-3(g-2-2m)\geq 0
$$
wich yields
$$
m\geq\frac{g-3}{3}+\frac{\eta-1}{3}
$$
so $g\geq (g-3)/3$ and equality holds if and only if $\eta=1$, that is, $g-g'=2$.

Now assume $m=0$ and $g-g'=3$ in order to prove (vii). Then we have that $g-1=g'+2$ and thus $\cl\subset\pp^{g'+2}$. Besides, $g-g'=3$ implies $\eta=2$. Hence, by (\ref{equdcl}),  $\deg(C')=2g'+2$. So $C'$ is a curve of genus $g'$ lying on a cone of $\mathbb{P}^{g'+2}$ with degree $2g'+2$. Therefore $C'$ does not pass through the vertex of the cone if and only if it is hyperelliptic, owing to \cite[Thm 2.1]{S2}, and $\cl$ passes through the vertex if and only if it is nearly normal, owing to \cite[Thm. 2.4]{M1}.

To prove (viii), let $\aal :=\oo _{\cl}\langle 1,x\rangle$ be a torsion free sheaf of rank $1$ which computes the gonality of $C'$, for $x\in k(\cl)=k(C)$. Set $\aaa :=\oo _{C}\langle1,x\rangle$. Since $\pi':C'\rightarrow C$ is birational, it preserves
cohomology by direct images and, in particular, we have that $H^{0}(C,\pi' _{*}(\aal ))=\langle 1,x\rangle$, thus  $\oo
\subset\aaa\subset\pi' _{*}(\aal )$ and also
$$
\deg_{C}\pi' _{*}(\aal)=\deg_{\cl}\aal +g-g'
$$
but ${\rm gon}(C)\leq \deg_C(\aaa)\leq\deg_{C}\pi'_*(\aal)$ and ${\rm gon}(\cl)=\deg_{C'}(\aal)$ so the item follows. If $m>0$ then the ruling of $S$ cuts out a $g_{\ell}^1$ in $C'$ so (vii) follows as well. Now if $m=0$ and $g-g'=3$, then by (vii), ${\rm gon}(C')=2$ in any case, so ${\rm gon}(C)\leq 5$. This proves (ix) and we are done.
\end{proof}


\section{Rational Curves on Surface Scrolls}

We first set up what will be the objects we will deal with in this section. Consider the morphism

\begin{gather*}
\begin{matrix}
\mathbb{P}^1 & \longrightarrow & \mathbb{P}^n \\
   (s:t)              & \longmapsto     & (s^{a_n}:t^{a_1}s^{a_n-a_1}:\ldots:t^{a_{n-1}}s^{a_n-a_{n-1}}:t^{a_n})
\end{matrix}
\end{gather*}
The image $C$ of such a map is what we call a \emph{rational monomial curve}, which, for simplicity, we denote by
$$
C=(1:t^{a_1}:\ldots:t^{a_{n-1}}:t^{a_n})
$$
with $a_1<\ldots < a_n$. A key property for our interest is the result below. In order to state it, recall the definition of $\kk^*_P$ in (\ref{equsss}). 

\begin{prop}
\label{prprat}
Let $C=(1:t^{a_1}:\ldots:t^{a_{n-1}}:t^{a_n})$ be a rational monomial curve such that $a_n=a_{n-1}+1$. Set $P=(1:0:\ldots:0)\in C$. Then its canonical model is
$$
C'=(1:t^{b_1}:\ldots :t^{b_{g-1}})
$$
where $\{0,b_1,\ldots,b_{g-1}\}=\kk_P^*$.
\end{prop}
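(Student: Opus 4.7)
My plan is to realize $H^{0}(C,\omega)$ explicitly as a space of rational differentials on $\cb$ via Rosenlicht's description of the dualizing sheaf, and then to read off the canonical morphism in the affine coordinate $t$ centered at $P$.

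First, I would use the hypothesis $a_{n}=a_{n-1}+1$ to check that $P$ is the only singular point of $C$. In the affine chart about the point at infinity, substituting $u=1/t$ in the parametrization, the coordinate coming from $t^{a_{n-1}}$ becomes exactly $u$, so the local ring there is a discrete valuation ring. Hence $\cb=\pum$, the unique point of $\cb$ over $P$ is $\pb=0$, and $g=\delta_{P}=|\nn\setminus\sss|$.

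Second, I would invoke Rosenlicht's theorem: $H^{0}(C,\omega)$ is identified with the space of rational differentials $\alpha$ on $\cb$ that are regular at every smooth point of $C$ and that satisfy $\mathrm{Res}_{\pb}(f\alpha)=0$ for every $f\in\op$. Writing $\alpha=\sum_{m\in\zz}c_{m}\,t^{m}\,dt$, regularity at $t=\infty$ forces $m\leq-2$, and testing the residue condition against the monomials $t^{a}$ for $a\in\sss$ yields $c_{-1-a}=0$. A $k$-basis of $H^{0}(C,\omega)$ is therefore
\[
\{\,t^{-1-g}\,dt\;:\;g\in\nn\setminus\sss\,\},
\]
confirming in passing that $h^{0}(\omega)=g$.

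Third, I would compute the canonical morphism $\kappa:\cb\to\pp^{g-1}$. Clearing denominators by multiplying all projective coordinates by the common monomial $t^{1+\gamma}=t^{\beta}$, the map becomes
\[
t\longmapsto(t^{\gamma-g})_{g\in\nn\setminus\sss}.
\]
The involution $a\mapsto\gamma-a$ on $[0,\gamma]$ places the set of gaps of $\sss$ in bijection with $\kk_{P}\cap[0,\gamma]=\kk_{P}^{*}$; since $\gamma$ is itself a gap, one of the coordinates is $t^{0}=1$, which yields the stated parametrization $C'=(1:t^{b_{1}}:\ldots:t^{b_{g-1}})$ with $\{0,b_{1},\ldots,b_{g-1}\}=\kk_{P}^{*}$.

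The main point that will require care is the correct application of Rosenlicht's residue description at the (possibly non-Gorenstein) unibranch point $P$, as well as an honest check that the map so obtained agrees with the one of Definition \ref{defcan}. Once those are in place, the identification $\{\gamma-g:g\in\nn\setminus\sss\}=\kk_{P}^{*}$ is just the elementary involution of $[0,\gamma]$.
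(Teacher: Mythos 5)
Your argument is correct and rests on the same two pillars as the paper's proof: Rosenlicht's residue description of the dualizing sheaf at the unique (unibranch) singular point, and the involution $a\mapsto\gamma-a$ identifying the gaps of $\sss$ with $\kk_P^*$; your ``clearing denominators by $t^{\beta}$'' is literally the paper's trivialization $\vv=\ww/\lambda$ with $\lambda=dt/t^{\beta}$. Where you genuinely diverge is in how $H^0(\ww)$ is pinned down. The paper first computes the stalk $(\ww/\lambda)_P$ from the residue condition, then introduces the candidate sheaf $\aaa=\oo_C\langle 1,t^{b_1},\ldots,t^{b_{g-1}}\rangle$ and proves the sheaf-level equality $\aaa=\vv$ via the degree count $\deg\aaa=\dim(\aaa_P/\op)+\dim(\aaa_\infty/\oo_\infty)=(2g-\beta)+(\beta-2)=2g-2$. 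You instead impose all the local conditions at once on a global rational differential $\sum c_m t^m\,dt$ --- regularity at the smooth points forces $m\le-2$ and a Laurent-polynomial coefficient, while the residue pairing against $t^a$, $a\in\sss$, kills $c_{-1-a}$ --- so the basis $\{t^{-1-\gamma'}dt:\gamma'\in\nn\setminus\sss\}$ drops out directly, with no degree computation. Your route is shorter and more self-contained for the purpose of identifying the canonical map; the paper's route proves slightly more (the explicit description of the whole sheaf $\vv$, not just its global sections), which it reuses elsewhere. The two points you flag as needing care are exactly the right ones, and both check out: the residue criterion applies verbatim at a non-Gorenstein unibranch point, and since multiplying all homogeneous coordinates by the common factor $t^{\beta}$ does not change the induced morphism, your map agrees with the one of Definition \ref{defcan}.
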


\begin{proof}
First we just show that $\kk^*$ has $g$ elements. Indeed, since $a_n=a_{n-1}+1$, we have that $P$ is the only singular point of $C$, so $g=\delta=\#(\kk^*)$ by the very construction of $\kk$. Now we will prove that
$$
\vv=\oo_C\langle 1,t^{b_1},\ldots,t^{b_{g-1}}\rangle
$$
where $\vv$ is defined as in (\ref{equvvv}). Set $\pb:=(1:0)\in\pum$ and let $\lambda\in\Omega_{k(C)|k}$ be a differential. We have that $(\ww/\lambda)_P$ is the largest among the fractional $\oo_P$-ideals $F$ on $k(C)$, satisfying the property that ${\rm Res}_{\pb}(f\lambda)=0$ for every $f\in F$. Taking $\lambda:=dt/t^{\beta}$ one sees that
\begin{equation}
\label{equwpl}
(\ww/\lambda)_P=k\oplus kt^{b_1}\oplus\ldots\oplus kt^{b_{g-1}}\oplus\cp
\end{equation}
and hence $\lambda$ is the desired differential introduced above satisfying $(\overline{\oo}\omega)_P=\obp\lambda$, and so $\ww/\lambda=\vv$ since $P$ is the only singular point of $C$. Let $\aaa:=\oo_C\langle 1,t^{b_1},\ldots,t^{b_{g-1}}\rangle$. We claim that $\aaa=\vv$. Since $h^0(\aaa)\geq g$ and $\vv$ is (an isomorphic image of) the dualizing sheaf, it suffices to prove that $\deg(\aaa)=2g-2$. Clearly
$$
\deg\aaa=\dim(\aaa_P/\oo_P)+\dim(\aaa_{\infty}/\oo_{\infty})
$$
where $\infty:=(0:\ldots:0:1)$. Since $\infty$ is smooth, $\dim(\aaa_{\infty}/\oo_{\infty})=b_{g-1}=\beta-2$. On the other hand, $\aaa_P=\vv_P$ and $\dim(\aaa_P/\cp)=g$ by (\ref{equwpl}). But 
\begin{align*}
\beta &=\dim(\obp/\cp) \\
          &=\dim(\obp/\oo_P)+\dim(\aap/\cp)-\dim(\aap/\op) \\
          &=2g-\dim(\aap/\op)
\end{align*}
and the claim follows. Therefore $H^0(\vv)=\langle 1,t^{b_1},\ldots,b^{g-1}\rangle$ and the result trivially follows from the definition of the canonical model.
\end{proof}

Now we prove a result for general scrolls which will also be helpful for our purposes here and in Section 5. We just warn the reader that the statement -- as any envolving monomiality -- depends on a choice of coordinates of the ambient space, though one is allowed at least to reodering them. 

\begin{lema}
\label{pa}
A rational monomial curve $(1:t^{a_1}:\ldots :t^{a_{N}})\subset \pp^{N}$ lies on a $d$-fold scroll $S_{m_1m_2\ldots m_d}$ if and only if there is a partition of the set $\{0=a_0,a_1,\ldots,a_{N}\}$ into $d$ subsets, with, respectively, $m_1+1, m_2+1,\ldots,m_d+1$ elements, such that the elements of all of these subsets can be reordered within an arithmetic progression with the same common difference.
\end{lema}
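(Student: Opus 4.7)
The plan is to read the statement directly off the matrix description of the scroll recalled in the Preliminaries. Recall that $S_{m_1,\ldots,m_d}\subset\mathbb{P}^N$ is the zero locus of the $2\times 2$ minors of a $2\times e$ matrix whose columns are organized into $d$ consecutive blocks: the $i$-th block uses $m_i+1$ consecutive coordinates $x_{p_i},x_{p_i+1},\ldots,x_{p_i+m_i}$ (with $p_1=0$ and $p_i=p_{i-1}+m_{i-1}+1$) and consists of the $m_i$ columns $(x_{p_i+j},x_{p_i+j+1})$, $j=0,\ldots,m_i-1$. After substituting the monomial parameterization $x_k=t^{a_k}$ (with $a_0=0$), the $2\times 2$ minor of two such columns with exponent pairs $(a_k,a_{k+1})$ and $(a_l,a_{l+1})$ equals $t^{a_k+a_{l+1}}-t^{a_l+a_{k+1}}$, which vanishes identically in $t$ if and only if $a_{k+1}-a_k=a_{l+1}-a_l$.

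From there I would conclude both directions simultaneously. The rank-$\leq 1$ condition on the full matrix amounts to: (a) within each block $i$, the consecutive differences $a_{p_i+j+1}-a_{p_i+j}$ are all equal to a common value $c_i$, i.e., the sequence $a_{p_i},\ldots,a_{p_i+m_i}$ is an arithmetic progression; and (b) the common differences $c_i$ agree across the $d$ blocks. The $d$ blocks of exponents then form exactly the partition claimed in the lemma, of sizes $m_1+1,\ldots,m_d+1$, with each subset arranged as an AP of common difference $c=c_1=\cdots=c_d$. Conversely, starting from such a partition, by the remark preceding the lemma one may permute the coordinates of $\mathbb{P}^N$ so that the $i$-th subset fills the $i$-th block of the scroll matrix in AP-order; the same minor computation then forces every column on the curve to be proportional to $(1,t^c)$, so the curve lies on $S_{m_1,\ldots,m_d}$.

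Main obstacle: none of depth. The whole argument is bookkeeping with the block structure of the scroll matrix. The only care needed is in aligning the freedom to permute the coordinates of the ambient $\mathbb{P}^N$ (which reorders the positions of the $a_k$'s) with the freedom to reorder elements within each subset in the lemma's formulation, and in matching block sizes $m_i+1$ (in coordinates) against $m_i$ (in columns of the scroll matrix).
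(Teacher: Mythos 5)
Your proposal is correct and follows essentially the same route as the paper: read the condition off the block structure of the scroll's defining $2\times 2$ minors after substituting $x_k=t^{a_k}$, observing that a minor vanishes identically iff the two exponent differences agree, which forces each block to be an arithmetic progression and all blocks to share the same common difference. Your version is slightly more explicit about the minor computation and about the coordinate permutation in the converse direction, but there is no substantive difference.
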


\begin{proof}
The $d$-fold scroll $S_{m_1m_2\ldots m_d}$ is the set of points $(x_0:\ldots: x_N)\subset\mathbb{P}^N$ such that  the rank of
$$
\bigg(
      \begin{array}{cccc}
        \ \ x_0\, \ldots\, x_{m_1-1} &  x_{m_1+1}\,\ldots \, x_{m_1+m_2} & \ldots & x_{m_1+\ldots+m_{d-1}+d-1}\, \ldots\, x_{N-1}\\
        x_1\,   \ldots\,  x_{m_1}  &  \ \ \ x_{m_1+2}\, \ldots\,  x_{m_1+m_2+1}&\ldots & x_{m_1+\ldots+m_{d-1}+d}\,\ldots\,x_N
\end{array}\bigg)
$$
is smaller than 2.

More explicitly, the above matrix is composed by smaller submatrices of the form
$$
\left(
\begin{array}{cccc}
x_{m_1+\ldots+m_{i}+i}&  x_{m_1+\ldots+m_{i}+i+1}&\ldots & x_{m_1+\ldots+m_{i}+m_{i+1}+i-1}\\
x_{m_1+\ldots+m_{i}+i+1}&  x_{m_1+\ldots+m_{i}+i+2} & \ldots & x_{m_1+\ldots+m_{i+1}+i}
     \end{array}
    \right)$$

Since in our case we have $x_{i}=t^{a_j}$, the submatrices above give a partition of the set of integers $a_i's$, which, sorted conveniently, is
$$
\{a_0,\ldots,a_{m_1}\},\ \{a_{m_1+1},\ldots,a_{m_1+m_2+1}\},\ \ldots\ , \{a_{m_1+\ldots+m_{d-1}+d-1},\ldots,a_N\}.
$$

The elements of each subset of the partition should form an arithmetic progression because of the relations of the submatrix they are related to. The common differences need to be the same due to the relations of the total matrix.
\end{proof}

So Proposition \ref{prprat} and Lemma \ref{pa} provide a way of computing canonical models of rational monomial curves with just one singularity and checking out the scrolls they live within. We exhibit below all non-Gorenstein curves of this form with genus at most $6$ whose canonical models lie on a surface scroll. We just show one scroll (with smallest $m$) for each canonical model, though it can possibly be contained in others and in different ways. Besides, we did not preserve the order on the powers of $t$ and allow them to be negative sometimes after multiplying by $t^s$ for a suitable $s$ so that $C'$ fits into the format of Theorem \ref{thmtwo} and one easily finds its plane equation.  We adopt the conventions: gn:= gonality, K:=Kunz, NG:=nearly Gorenstein, NN:=nearly normal.

\begin{center}
\begin{tabular}{|c|c|c|c|c|c|c|}
 \hline
\multicolumn{7}{|c|}{\textbf{genus 4}}\\
\hline
$C$ &  gn &  & $C'$ & eq & $\ell$ & $m$ \\ 
\hline
$(1:t^5:t^6:t^7:t^8:t^9)$ & 2 & NN & $(1:t:t^2:t^3)$ & $y-x^3$ & 1 & 0  \\
$(1:t^4:t^5:t^7:t^8)$ & 3 & K & $(1:t:t^2:t^{-3})$ & $x^3y-1$ & 1 & 0  \\
$(1:t^4:t^6:t^7:t^8:t^9)$ & 3 & NG & $(1:t^2:t^4:t^3)$ & $y^2-x^3$ & 2 & 0  \\
$(1:t^3:t^7:t^8)$ & 3 & -- & $(1:t:t^3:t^4)$ & $y-x^3$ & 1 & 1  \\ 
\hline
\multicolumn{7}{|c|}{\textbf{genus 5}}\\
\hline
$(1:t^6:t^7:t^8:t^9:t^{10}:t^{11})$ & 2 & NN & $(1:t:t^2:t^3:t^4)$ & $y-x^4$ & 1 & 0  \\
$(1:t^5:t^7:t^8:t^9:t^{10}:t^{11})$ & 3 & NG & $(1:t:t^2:t^3:t^{-2})$ & $x^2y-1$ & 1 & 0  \\ 
$(1:t^5:t^6:t^8:t^9)$ & 3 & NG & $(1:t:t^2:t^3:t^{-3})$ & $x^3y-1$ & 1 & 0  \\
$(1:t^5:t^6:t^7:t^9:t^{10})$ & 3 & K & $(1:t:t^2:t^3:t^{-4})$ & $x^4y-1$ & 1 & 0  \\
$(1:t^4:t^6:t^9:t^{10}:t^{11})$ & 3 & NG & $(1:t^2:t^4:t^6:t^5)$ & $y^2-x^5$ & 2 & 0  \\
$(1:t^4:t^5:t^{10}:t^{11})$ & 3 & -- & $(1:t:t^2:t^{-4}:t^{-3})$ & $x^4y-1$ & 1 & 1  \\
$(1:t^5:t^7:t^8:t^9:t^{10}:t^{11})$ & 3 & NG & $(1:t^2:t^4:t^3:t^5)$ & $y^2-x^3$ & 2 & 1  \\
$(1:t^4:t^7:t^9:t^{10})$ & 3 & -- & $(1:t^3:t^5:t:t^4)$ & $y^3-x$ & 3 & 1  \\
$(1:t^3:t^8:t^9:t^{10})$ & 3 & -- & $(1:t^3:t^6:t^2:t^5)$ & $y^3-x^2$ & 3 & 1  \\
$(1:t^3:t^7:t^{10}:t^{11})$ & 3 & K & $(1:{t}^3:{t^6}:{t^4}:{t^7})$ & $y^3-x^4$ & 3 & 1  \\
\hline
\multicolumn{7}{|c|}{\textbf{genus 6}}\\
\hline
$(1:t^7:t^8:\ldots:t^{12}:t^{13})$ & 2 & NN & $(1:t:t^2:t^3:t^4:t^5)$ & $y-x^5$ & 1 & 0  \\
$(1:t^4:t^6:t^{11}:t^{12}:t^{13})$ & 3 & NG & $(1:t^2:t^4:t^6:t^8:t^7)$ & $y^2-x^7$ & 2 & 0  \\
$(1:t^5:t^8:t^9:t^{11}:t^{12})$ & 3 & -- & $(1:t:t^2:t^3:t^{-3}:t^{-2})$ & $x^3y-1$ & 1 & 1  \\
$(1:t^5:t^6:t^9:t^{12}:t^{13})$ & 3 & -- & $(1:t:t^2:t^3:t^{-4}:t^{-3})$ & $x^4y-1$ & 1 & 1  \\
$(1:t^5:t^6:t^7)$ & 3 & -- & $(1:t:t^2:t^3:t^{-5}:t^{-4})$ & $x^5y-1$ & 1 & 1  \\
$(1:t^5:t^7:t^9:t^{11}:t^{12}:t^{13})$ & 3 & NG & $(1:t^2:t^4:t^6:t^5:t^7)$ & $y^2-x^5$ & 2 & 1  \\
$(1:t^3:t^8:t^{12}:t^{13})$ & 3 & K & $(1:t^3:t^6:t^9:t^5:t^8)$ & $y^3-x^5$ & 3 & 1  \\
$(1:t^4:t^9:t^{10}:t^{11})$ & 3 & -- & $(1:t:t^2:t^4:{t^5}:{t^6})$ & $y-x^4$ & 1 & 2   \\ 
$(1:t^3:t^{10}:t^{11})$ & 3 & -- & $(1:t^3:t^6:t:t^4:t^7)$ & $y^3-x$ & 3 & 2  \\
\hline
\end{tabular}
\end{center}

\

Note that one is able to verify the statements of Theorem \ref{thmtwo}. We have that $\ell\leq 2$ if $m=0$ and $\ell\leq 3$ if $m>0$. The cases where $\ell=2$ are all Kunz, nearly normal or nearly Gorenstein curves (recall that the first ones imply the latter). On the other hand, if $\ell=3$ the curve is either Kunz or of no special kind.

Note also that, at least for genus at most $6$, there are no monomial rational curves lying on a surface scroll with gonality greater than $3$. This is not a particular fact, as we see in the result below.

\begin{teo}\label{scroll2}
The canonical model of a rational monomial curve $C$, with a unique singular point, lies on a surface scroll if and only if $\gon(C)\leq 3$.
\end{teo}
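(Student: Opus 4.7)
The plan is to translate ``$C'$ lies on a surface scroll'' into combinatorics of the value semigroup $S := S_P$ at the unique singular point $P$, and then to match that combinatorial condition against the gonality. Combining Proposition~\ref{prprat} and Lemma~\ref{pa}, $C'$ lies on a surface scroll if and only if the exponent set $\kk_P^{*} = \{0, b_1, \dots, b_{g-1}\}$ partitions into two arithmetic progressions sharing a common difference $d$; moreover, this $d$ equals the intersection number $\ell$ of $C'$ with a fiber of the scroll (under the chart parametrization $x = t^d$), so by Theorem~\ref{thmtwo}(i) we have $d \leq 3$. Under the involution $a \mapsto \gamma - a$ (with $\gamma = \beta - 1$ the Frobenius number), $\kk_P^{*}$ is in bijection with the gap set $G := \mathbb{N} \setminus S$, and the scroll condition becomes: $G$ is a disjoint union of two arithmetic progressions of common difference $d \leq 3$.

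For the direction $(\Rightarrow)$ I argue by cases on $d \in \{1, 2, 3\}$. If $d = 3$: either $2 \in S$ (so $\gon(C) \leq 2$), or $1, 2 \in G$ lie in the two different APs (having different residues modulo $3$), whence no multiple of $3$ is a gap and in particular $3 \in S$; then $\mathcal{O}_C\langle 1, t^3\rangle$ provides a $g^1_3$. If $d = 2$: a parity analysis shows either $2 \in S$, or $2 \notin S$ and the two APs are of distinct parities with the even AP starting at $2$; in the latter subcase one computes directly that $f(2) := |\{s \in S : s - 2 \notin S\}| = 3$, so $\mathcal{O}_C\langle 1, t^{-2}\rangle$ realizes a $g^1_3$. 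If $d = 1$: the gaps form at most two runs of consecutive integers, so $S$ has at most three maximal intervals; hence $f(1) \leq 3$ and $\mathcal{O}_C\langle 1, t^{-1}\rangle$ is a $g^1_3$.

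For the direction $(\Leftarrow)$, let $\alpha := \min(S \setminus \{0\})$ be the multiplicity of $P$. If $\alpha \leq 3$, the Apéry set $\mathrm{Ap}(S, \alpha) = \{s \in S : s - \alpha \notin S\}$ has exactly $\alpha$ elements, so $G$ is visibly a disjoint union of $\alpha - 1 \leq 2$ arithmetic progressions of common difference $\alpha$. If $\alpha \geq 4$, the $g^1_3$ must come from a pencil $\mathcal{O}_C\langle 1, x\rangle$ with $x$ having a pole at $P$; one reduces to $x = t^{-k}$ monomial so that $f(k) \leq 3$, and $\alpha \geq 4$ forces $k \leq 3$ because for larger $k$ too many shifts of gaps of $S$ would land inside $S$. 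A direct count then shows $f(k) \leq 3$ with $k \leq 3$ implies $G$ decomposes as at most two APs of common difference $k$.

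The main obstacle I expect is the $\alpha \geq 4$ subcase of $(\Leftarrow)$: both the reduction from a general pencil to a monomial one, and the combinatorics of $f(k) \leq 3$ when $k \notin S$, are delicate because the standard Apéry-set theory applies only when the shift parameter lies in the semigroup. One must argue directly in terms of the maximal-interval structure of $S$, and its behaviour under the shift by $k$, in order to extract the two-AP decomposition of $G$.
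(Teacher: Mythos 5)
Your forward direction (canonical model on a scroll $\Rightarrow\gon(C)\leq 3$) is, in substance, the paper's own argument: apply Proposition~\ref{prprat} and Lemma~\ref{pa} to split $\kk^*$ into two arithmetic progressions with common difference $d$, then case on $d$ and exhibit an explicit monomial pencil of degree at most $3$. Transporting everything to the gap set via $a\mapsto\gamma-a$ and using $\oo_C\langle 1,t^{-1}\rangle$, $\oo_C\langle 1,t^{-2}\rangle$ in place of the paper's $\oo_C\langle 1,t\rangle$, $\oo_C\langle 1,t^2\rangle$ is a cosmetic difference, and your degree counts in the cases $d=1,2,3$ check out. The one genuine problem in this direction is how you dispose of $d\geq 4$. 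You invoke ``$d=\ell\leq 3$ by Theorem~\ref{thmtwo}(i)'', but that theorem assumes $C$ non-Gorenstein of genus $\geq 4$ (neither is a hypothesis of Theorem~\ref{scroll2}), and the identification $d=\ell$ only holds when the scroll is smooth: for a cone the part $A_1=\{0\}$ is a singleton and $d$ is just the common difference of $A_2$, unrelated to $\ell$ (compare the table entry $C'=(1:t^2:t^4:t^3)$, which has $\ell=2$ on a cone while the relevant partition $\{0\}\cup\{2,3,4\}$ has difference $1$). The paper instead kills $d\geq 3$ directly: if $\alpha\geq 4$ then $\gamma-1,\gamma-2,\gamma-3$ all lie in $\kk^*$ and are pairwise at distance at most $2$, so they cannot be distributed into two APs of common difference $\geq 3$; hence $d\geq 3$ forces $\alpha\leq 3$, which gives $\gon(C)\leq 3$ at once. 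Your mod-$3$ argument for $d=3$ in fact extends verbatim to all $d\geq 3$ (the gaps are confined to the residues of $1$ and $2$ mod $d$, so $3\in\sss$), so the hole is easily patched, but as written the case $d\geq 4$ is not covered by a valid argument.

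For the converse the paper does nothing: it records that ``the converse is already known'' (trigonal non-Gorenstein curves have canonical models on surface scrolls by \cite{M1,M2}, and gonality $\leq 2$ yields a rational normal curve). Your attempt to reprove it combinatorially is where the real gap lies, and you partly acknowledge this. The $\alpha\leq 3$ branch via Ap\'ery sets is fine, but in the $\alpha\geq 4$ branch the assertion that the $g^1_3$ ``must come from a pencil $\oo_C\langle 1,x\rangle$ with $x$ having a pole at $P$'' is false: by \cite{FM} the gonality is computed by some $\oo_C\langle 1,t^n\rangle$ with $n\in\zz\setminus\{0\}$, and $n>0$ (pole only at the smooth point $\infty$) is perfectly possible -- for instance $n=1$ gives degree $3$ whenever $\sss$ has exactly two finite blocks, even with $\alpha$ large. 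So the reduction to $x=t^{-k}$, the bound $k\leq 3$, and the final claim that $f(k)\leq 3$ forces a two-AP decomposition of the gap set are all unestablished; the cases $n>0$ must be analyzed separately (they do work out, but that needs to be written). Since this direction can simply be cited, the cleanest repair is to do exactly that.
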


\begin{proof}

The converse is already known. For sufficiency, let
$$
\cl=(1:t^{b_1}:t^{b_2}:\ldots :t^{b_{g-1}})\subset\pp^{g-1}.
$$
and set
$$
A=\{0,b_1,b_2,\ldots,b_{g-1}\}.
$$
Let $\sss$ be the semigroup of the singular point $P\in C$ and $\kk$, defined by means of it. We have
$$
A=\kk^*
$$
and $b_{g-1}=\beta-2=\gamma-1$. Moreover,
$$
\{\gamma-\alpha+1,\ldots,\gamma-1\}\subset A\ \ \text{and}\ \ \gamma-\alpha\not\in A
$$
by the very definition of $\kk$. We have $\gamma-\alpha<0$ if and only if $\alpha=\beta$, that is, $\cp=\mmp$, which is equivalent to saying that $C$ is nearly normal, and so $\gon(C)=2$ owing to \cite[Thm 3.4]{KM} and \cite[Thm 2.1]{M1}. On the other hand, $\gamma-\alpha=0$ never happens by the definition of $\beta$. So we may assume $\gamma-\alpha>0$.

Now, in order to have $\cl$ lying on a surface scroll, by Lemma \ref{pa}, there is a partition of $A$ into two subsets, say $A_1$ and $A_2$ where the first has $0$ as an element, and both form an arithmetic progression with the same common difference. Say $r$ is this common difference.

If $r=1$,  then necessarily
\begin{align*}
A_1&=\{0,1,2,\ldots,a\} \\
A_2&=\{\gamma-\alpha+1,\gamma-\alpha+2,\gamma-\alpha+3,\ldots,\gamma-1\}
\end{align*}
and one can check that
$$
\sss^*=\{0,\alpha,\alpha+1,\alpha+2,\ldots,\gamma-a-1,\beta\}
$$
Now set
$$
\aaa:=\oo_C\langle 1,t\rangle
$$
We have that $\aaa$ has degree 1 at $\infty$ and 0 elsewhere but $P$. Moreover,
$$
\aaa_P=kt\oplus kt^{\gamma-a}\oplus\op
$$
hence $\dim(\aaa_P/\oo_P)=2$ and $\deg(\aaa)=3$. Therefore $C$ is trigonal.

In order to prove the result for $r=2$, we may assume $\alpha\geq 3$, because if $\alpha=1$, then $C=\pum$ and $\gon(C)=1$; if $\alpha=2$ then $C$ is hyperelliptic, in particular, $\gon(C)=2$. In fact, if $\alpha=2$, set $\aaa=\oo_{C}\langle 1,t^2\rangle$. Then $\aaa$ has degree $0$ elsewhere but $\infty$, where it has degree $2$. So $\deg(\aaa)=\gon(C)=2$.

But if $\alpha\geq 3$, then $\gamma-1,\gamma-2\in \kk$. Now one of these two numbers is even, and it has to be in $A_1$ since $r=2$. It follows that all even numbers smaller than $\gamma$ are in $\kk$. Let $b$ be the first odd number in $\kk$.

If $\gamma$ is even, then all positive even numbers smaller than $\gamma$ are not in $\sss$. Hence one sees that
\begin{equation}
\label{equss2}
\sss^*=\{0,\gamma-b+2,\gamma-b+4,\gamma-b+6,\ldots,\beta-2,\beta\}
\end{equation}
and if $\gamma$ is odd, all odd numbers smaller than $\gamma$ are not in $\sss$ and we have that $\sss^*$ has to be as in (\ref{equss2}) again.

Now $\aaa:=\oo_{C}\langle 1,t^2\rangle$, then $\aaa$ has degree $0$ elsewhere but at $P$ and $\infty$, where it has degree $2$. Besides
$$
\aaa_P=kt^2\oplus \op
$$
so $\aaa$ has degree $1$ at $P$ and $\deg(C)=\gon(C)=3$.

To prove the result for $r\geq 3$, we may assume $\alpha\geq 4$, because if $\alpha=3$, then $\oo_{C}\langle 1,t^3\rangle$, has degree $0$ elsewhere but at $\infty$, where it has degree $3$. Therefore we have $\gon(C)\leq\deg(\aaa)=3$.

But if $\alpha\geq 4$, then $\gamma-1,~\gamma-2,~\gamma-3~\in~\kk$. Since $r\geq 3$, these three numbers have to be in different sets of the partition which defines the scroll. This is impossible unless its dimension is at least $3$, which is not the case here.
\end{proof}

We close this section with a simple, though possibly important result, if one is interested on classifying integral curves by means of canonical models.

\begin{teo}
\label{thmmod}
Two nonhyperelliptic curves with just one singular point which is unibranch and monomial are isomorphic if and only if their canonical models are the same.
\end{teo}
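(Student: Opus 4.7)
The ``only if'' direction is immediate. For the ``if'' direction, let $C_1,C_2$ be two such curves with $C_1'\cong C_2'=:C'$. The plan is three-fold: identify a common normalization and crimping point; extract the value semigroup at the singular point from $C'$; and then use monomiality to reconstruct the local rings.

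First I would invoke Rosenlicht's factorization from Definition~\ref{defcan}: since both $C_i$ are nonhyperelliptic, the normalization $\pi_i:\bar C_i\to C_i$ factors through a morphism $\pi_i':C'\to C_i$. Hence $\bar C_1=\bar{C'}=\bar C_2=:\bar C$. Each singular point $P_i$ has a unique preimage $\bar P_i\in\bar C$ by unibranchedness, and its image $P_i'\in C'$ under $\bar C\to C'$ is the unique point at which $\pi_i'$ fails to be a local isomorphism. Consequently $C'$ is smooth outside $P_i'$ for each $i$, so either $C'$ has a unique singular point forcing $P_1'=P_2'$, or $C'$ is smooth and we may (in the rational monomial setting of this section, where $\bar C=\pp^1$) compose the given isomorphism $C_1'\cong C_2'$ with an automorphism of $\pp^1$ to arrange $\bar P_1=\bar P_2=:\bar P$. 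Fix a local parameter $t$ at $\bar P$.

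The heart of the proof is recovering the value semigroup $\sss=\sss_{P_i}$ from $C'$. Proposition~\ref{prprat} asserts that
\[
C'=(1:t^{b_1}:\ldots:t^{b_{g-1}})\subset\pp^{g-1},
\]
with exponent set $\{0,b_1,\ldots,b_{g-1}\}=\kk_{P_i}^*$; its proof further yields $b_{g-1}=\beta-2$, where $\beta$ is the conductor of $\sss_{P_i}$. Reading the monomial exponents off the common embedding of $C'$ therefore determines $\kk_P^*$, then $\beta=b_{g-1}+2$, and finally the whole semigroup $\sss_{P_i}=\kk_P^*\cup\{n\in\nn\,:\,n\geq\beta\}$. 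Since this extraction depends only on $C'$, one gets $\sss_{P_1}=\sss_{P_2}=:\sss$.

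Monomiality now gives $\mathcal{O}_{P_i,C_i}=k[t^s\,:\,s\in\sss]_{(t)}$ as identical subrings of $k(\bar C)$; together with the already-identified structure of $\bar C$ away from $\bar P$, this yields $C_1\cong C_2$. I expect the main obstacle to be the middle step: Proposition~\ref{prprat} runs in the forward direction (from $\sss$ to $C'$), and its reversal requires carefully harvesting the subsidiary identity $b_{g-1}=\beta-2$ from its proof, without which only $\kk_P^*$ and not the conductor $\beta$ can be recovered. A secondary subtle point lies in Step~1 when $C'$ is smooth: the identification $P_1'=P_2'$ then relies on transitivity of $\mathrm{Aut}(\pp^1)$ on points, and so genuinely uses the rational monomial nature of the setting.
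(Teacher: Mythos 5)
Your overall architecture matches the paper's: reduce, via Rosenlicht and monomiality, to showing that the two singular points have the same value semigroup, and recover that semigroup from the exponent data of the common canonical model through $\kk^*$. But the key recovery step fails as written. With the reading of $\kk_P^*$ that makes Proposition \ref{prprat} true --- namely $\kk_P^*=\{a\in\kk_P\mid 0\le a<\beta\}=\{a\mid 0\le a\le\gamma,\ \gamma-a\notin\sss\}$ (the displayed definition $\{a\in\sss\mid a<\beta\}$ is a typo: under it $\#\kk^*$ would be $\beta-\delta$ rather than $\delta=g$, and Proposition \ref{prprat} could not hold for non-Gorenstein points) --- the set $\kk^*$ consists of the \emph{duals of the gaps}, not of the semigroup elements below the conductor. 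Hence your formula $\sss_{P_i}=\kk_P^*\cup\{n\ge\beta\}$ is false. Take $C=(1:t^3:t^7:t^8)$ from the genus-$4$ table: here $\sss=\{0,3,6,7,8,\dots\}$, $\gamma=5$, $\kk^*=\{0,1,3,4\}$ (matching the exponents of $C'=(1:t:t^3:t^4)$), while $\kk^*\cup\{n\ge 6\}=\{0,1,3,4,6,7,\dots\}$ is not even a semigroup. The correct inversion, and the one the paper uses, is the duality $s\in\sss\iff\gamma-s\notin\kk$; so from $\kk_{P_1}^*=\kk_{P_2}^*$ one must still establish $\gamma_{P_1}=\gamma_{P_2}$ before concluding $\sss_{P_1}=\sss_{P_2}$. (Your idea of reading off $\beta=b_{g-1}+2$ does supply $\gamma$ in the rational case; the paper instead derives $\gamma_{P_1}=\gamma_{P_2}$ purely combinatorially from the equality of the $\kk^*$'s, via the little argument with $\gamma_{P_i}-1\in\kk^*$ and $\gamma_{P_i}\notin\kk^*$.)

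A second, smaller gap: the theorem is not restricted to rational curves, but your argument is --- you lean on Proposition \ref{prprat} and on transitivity of $\mathrm{Aut}(\pp^1)$, both of which presuppose $\overline{C}=\pp^1$. The paper avoids this by working directly with $H^0(\vv_{C_i})$: it splits a basis into $\overline{g}_i$ sections regular at $P_i$ and $\delta_{P_i}$ sections with poles there, deduces $\delta_{P_1}=\delta_{P_2}$ from $\overline{g}_1=\overline{g}_2$, and identifies the pole orders with $\kk_{P_i}^*$ using unibranchness --- no rationality needed. If you intend your proof to cover only the rational monomial setting, say so explicitly; otherwise this step needs the paper's more intrinsic formulation.
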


\begin{proof}
Necessity is immediate. To prove sufficiency, let $C_1$ and $C_2$ be such curves. Assume $C_1'=C_2'$, i.e., they have the same canonical model. By \cite[Thm. 17]{R}, any nonhyperelliptic curve is birationally equivalent to its canonical model. So the nonsigular models $\cb_1$ and $\cb_2$ are isomorphic. Hence we just have to prove that $\oo_{C_1,P_1}\cong\oo_{C_2,P_2}$ where $P_1$ and $P_2$ are the singular points of $C_1$ and $C_2$, respectively. But the local rings of two monomial points with the same number of branches are isomorphic if and only if their semigroup of values coincide. So let us prove that $\sss_{P_1}=\sss_{P_2}$. We have that $C_1'=C_2'$ if and only if $H^0(\ww_{C_1})=H^0(\ww_{C_2})$ which is equivalent to saying that $H^0(\vv_{C_1})=H^0(\vv_{C_2})$.  Now, for $i=1,2$, one may write $H^0(\vv_{C_i})=\langle x_1,\ldots,x_{\overline{g}_i},y_1,\ldots,y_{\delta_{P_i}}\rangle$, where $\overline{g}_i$ is the genus of $\overline{C}_i$ , and $\delta_{P_i}$ is the singularity degree of $P_i$.  Since $\overline{g}_1=\overline{g}_2$ it follows that $\delta_{P_1}=\delta_{P_2}$. Using the fact that the points are unibranch,  we thus have $v_{P_i}(\{ y_1,\ldots,y_{\delta_{P_i}}\})=\text{K}_{P_i}^*$ for $i=1,2$. Therefore $\text{K}_{P_1}^*=\text{K}_{P_2}^*$. So it suffices to show that the semigroups have the same conductor. In fact, if so, $\kk_{P_1}=\kk_{P_2}$.  But if the  latter equality holds, take  $s\in \sss_{P_1}$. Then $\gamma-s\notin \kk_{P_1}=\kk_{P_2}$. So $s=\gamma-(\gamma-s)\in \sss_{P_2}$. Analogously we have that $\sss_{P_2}\subset \sss_{P_1}$. Thus $\sss_{P_1}=\sss_{P_2}$ as we wish. 

So let us prove that the semigroups have the same conductor, i.e., $\beta_{P_1}=\beta_{P_2}$. We claim that $\gamma_{P_1}<\beta_{P_2}$. In fact, assume $\gamma_{P_1}=\beta_{P_2}$, then $\gamma_{P_2}=\gamma_{P_1}-1\in \kk_{P_1}^*=\kk_{P_2}^*$,  so $\gamma_{P_2}\in \kk_{P_2}^*$ which cannot happen. Now, if $\gamma_{P_1}>\beta_2$, we have $\gamma_{P_1}-1\in \kk_{P_1}^*=\kk_{P_2}^*$, but $\gamma_{P_1}-1\geq\beta_{P_2}$ and hence cannot be in $\kk_{P_2}^*$ which is another contradiction. So the claim follows and, analogously, $\gamma_{P_2}<\beta_{P_1}$ and therefore $\gamma_{P_1}<\beta_{P_2}$ and $\gamma_{P_1}\notin \kk_{P_2}^*$ implies $\gamma_{P_1}\notin \kk_{P_2}$ and, similarly, $\gamma_{P_2}\notin \kk_{P_1}$. So, by definition, $\gamma_{P_2}-\gamma_{P_1}\in \sss_{P_2}$ and $\gamma_{P_1}-\gamma_{P_2}\in \sss_{P_1}$, but $\sss_{P_1},\sss_{P_2}\subset\nn$ and then $\gamma_{P_2}-\gamma_{P_1}\geq0$ and $\gamma_{P_1}-\gamma_{P_2}\geq0$. Thus $\gamma_{P_1}=\gamma_{P_2}$ and $\beta_{P_1}=\beta_{P_2}$, as desired.
\end{proof}

Although we did not prove any systematic way of obtaining canonical models of rational monomial curves with two singularities -- as we did in Proposition \ref{prprat} in the case of a unique one -- if the genus is low, one can test by hand a candidate for an immersion of the dualizing sheaf into the constant sheaf of rational functions  by checking the properties that it has to have $g$ linearly independent global sections and degree $2g-2$. With this strategy we exhibit the full list of rational monomial curves of genus at most $5$, with two singular points, whose canonical models lie on a surface scroll. We adopt the conventions: $0:=(1:0:\ldots:0)$, $\infty:=(0:\ldots:0:1)$ and, as before, $\delta$ is the singularity degree.

\

\begin{center}
\begin{tabular}{|c|c|c|c|}
\hline
\multicolumn{4}{|c|}{\textbf{genus 4 with $\delta_0=1$ and $\delta_\infty=3$}}\\
\hline
$C$ & gn & $C'$ & $m$ \\
\hline
$(1:t^2:t^3:t^4:t^5:t^9)$ & 3 & $(1:t^2:t^3:t^4)$ &  0\\
$(1:t^2:t^3:t^4:t^6:t^9)$ & 3 & $(1:t^2:t^3:t^5)$ & 1\\

\hline
\multicolumn{4}{|c|}{\textbf{genus 4 with $\delta_0=2$ and $\delta_\infty=2$}}\\
\hline
$(1:t^3:t^4:t^5:t^7:t^9)$ & 3 & $(1:t:t^3:t^5)$ &  0\\
$(1:t^3:t^4:t^5:t^8)$   & 3     & $(1:t:t^3:t^4)$  & $1$\\
\hline
\multicolumn{4}{|c|}{\textbf{genus 5 with $\delta_0=1$ e $\delta_\infty=4$}}\\
\hline
$(1:t^2:t^3:t^4:t^5:t^9)$      & 3               & $(1:t^2:t^3:t^4:t^5)$   & 0 \\
$(1:t^2:t^3:t^5:t^9)$      & 3               & $(1:t^2:t^3:t^4:t^6)$   & 0 \\
$(1:t^2:t^3:t^7:t^{10})$      & 3               & $(1:t^2:t^3:t^5:t^6)$   & 1 \\
\hline
\multicolumn{4}{|c|}{\textbf{genus 5 with $\delta_0=2$ e $\delta_\infty=3$}}\\
\hline
$(1:t^3:t^4:t^5:t^7:t^{9}:t^{11})$     & 3          & $(1:t:t^3:t^5:t^7)$   & 0 \\
$(1:t^2:t^4:t^5:t^6:t^7:t^{11})$  & 3 & $(1:t^2:t^4:t^5:t^6)$  & 0\\
$(1:t^3:t^4:t^5:t^6:t^{10})$      & 3               & $(1:t:t^3:t^4:t^5)$   & 1 \\
$(1:t^3:t^4:t^5:t^7:t^{10})$  & 3 & $(1:t:t^3:t^4:t^6)$  & $1$\\
$(1:t^2:t^5:t^7:t^9:t^{12})$ & 3   & $(1:t^2:t^4:t^5:t^7)$   & 1\\
\hline
\end{tabular}\end{center}

\

Note that all of them have gonality $3$ as well.


\section{Canonical Models on Threefold Scrolls}

In this section, we study curves for which the canonical model lies on a three-dimensional rational normal scroll, a \emph{threefold scroll} for short. The aim is obtaining similar results as the ones we got in Section 2. But here we assume the canonical model is a local complete intersection. As we did before, we start by obtaining a general formula envolving the main invariants of any such a curve and, afterwards, we apply it to the case where it is a canonical model.

So let $X$ be a local complete intersection curve which lies on a threefold scroll $S\subset\mathbb{P}^N$. Applying the Hartshorne-Serre correspondence, see for example \cite{A}, one can find a uniquely determined rank 2 vector bundle $E$ on $S$, such that $X$ is the zero locus of a global section of $E$. We then get a short exact sequence of the form
$$
0 \longrightarrow \oo_S \longrightarrow E \longrightarrow \mathcal{I}_C \otimes \bigwedge^2 E \longrightarrow 0
$$
which induces the following resolution of the structure sheaf of $X$
\begin{equation}
\label{equrs3}
0 \longrightarrow \bigwedge^2 E^\lor \longrightarrow E^\lor \longrightarrow \oo_S \longrightarrow \oo_X \longrightarrow 0.
\end{equation}
Writing
$$
c_1(E)=uH+vF
$$
we will say that $X$ is of $(u,v)$-\emph{type} 

Now the fundamental class of $X$ is given by the second Chern class of $E$, say,
$$
X=c_2(E)=wH^2 + zHF
$$
From (\ref{equchw}) and (\ref{equrel}) we get $(wH^2 + zHF).F=w$, whereas by (\ref{equlll}) we are led to
\begin{equation}
\label{equwel}
w=\ell
\end{equation} 
the instersection number of $X$ and a general fiber of $S$. Similarly, form (\ref{equrel}) we get $(wH^2 + zHF).H = we + z$, and by (\ref{equnnn}) and (\ref{equwel}) we have
\begin{equation}
\label{equzzz}
z=\deg(X)-\ell(N-2)
\end{equation}
In order to compute the genus of $X$ we use the resolution (\ref{equrs3}) which yields to
\begin{equation}
\label{equpax}
p_a(X)=\chi(E^\lor) - \chi\big(\bigwedge^2 E^\lor\big)
\end{equation}
Again, we will firstly use Hirzebruch-Riemann-Roch Theorem to compute these Euler characteristics. To begin with, we start with an arbitrary $D\in{\rm Pic}(S)$. Let $\mathcal{T}_S$ be the tangent bundle and write $c_i:=c_i(\mathcal{T}_S)$ for its Chern classes. We have
$$
\chi(\oo_S(D))=\frac{1}{24}\,c_1c_2+\frac{1}{12}\,D(c_1^2+c_2)+\frac{1}{4}\,D^2c_1+\frac{1}{6}\,D^3                      
$$
Now  (\ref{equccs}) yields
$$
c_1=-K_S=3H+(2-e)F
$$
while one can compute and obtain 
\begin{equation}
\label{equc2T}
c_2=3H^2+(6-2e)HF
\end{equation}
Writing $D=hH+fF$, one gets
\begin{equation}
\label{equxhf}
\chi(\oo_S(hH+fF))=1+\frac{2e+9}{6}h+f+\frac{e+1}{2}h^2+\frac{3}{2}hf+\frac{e}{6}h^3+\frac{1}{2}h^2f
\end{equation}
which allows us quickly calculating the first summand in the right hand side of (\ref{equpax}), that is,
\begin{align}
\label{equced}
\chi(E^\lor)&=2-\frac{(2e+9)}{12}u-v-(e+1)w-\frac{3}{2}z+\frac{(e+1)}{2}u^2+\frac{3}{2}uv\\
                   &\ \ \ \ \ \ \ \ \ \ \ \ \ \ \ \ \ \ \ \ \ \ \ \ \ \ \ \ \ \ \ \ \ \ \ \ \ \ \ \ +\frac{e}{2}uw+\frac{1}{2}uz+\frac{1}{2}vw-\frac{e}{6}u^3-\frac{1}{2}u^2v\nonumber
\end{align}
and immediately the second one since $\chi(\wedge^2 E^\lor)=\chi(\oo_S(-uH-vF))$. Thus
\begin{equation}
\label{equpas}
p_a(X) = 1 +\frac{(e(u-2)+v-2)w+(u-3)z}{2}
\end{equation}
Using (\ref{equnnn}), (\ref{equwel}) and (\ref{equzzz}), we are led to
\begin{equation}
\label{equgne}
2p_a(X)-2=(u-3)\deg(X)+\ell(v+N-4)
\end{equation}
If $E$ splits, i.e, $E=\oo_S(aH+bF)\oplus\oo_S(cH+dF)$, that is, $X=(aH+bF)(cH+dF)$ is a complete intersection inside $S$, then we have the relations
$$
\begin{array}{cc}
u=a+c\ \ \ \ \ & \ \ w=ac\\
v=b+d\ \ \ \ \ & \ \ \ \ \ \ \ \ z=ad+bc
\end{array}
$$
So we could have used the resolution
$$
0\rightarrow\oo_S(-(a+c)H-(b+d)F)\to\oo_S(-aH-bF)\oplus\oo_S(-cH-dF)\to\oo_S\to\oo_X\to 0
$$
to get the following formula
\begin{equation}
\label{equgnn}
2p_a(X)-2=(a+c-3)\deg(X)+\ell(b+d+N-4)
\end{equation}
of which (\ref{equgne}) is a generalization.

We just note  we could had applied Hirzebruch-Riemann-Roch Theorem to write
$$
\chi(E^\lor) =1+\chi\big(\bigwedge^2 E^\lor\big) +\frac{1}{2}K_Sc_2(E)+\frac{1}{2}c_1(E)c_2(E)
$$
and then
$$
p_a(X)=1+\frac{c_2(E)(K_S+c_1(E))}{2}
$$
which provides a faster way of getting (\ref{equgne}) (and hence (\ref{equgnn})) with no need of (\ref{equc2T}), (\ref{equxhf}) and (\ref{equced}) either. We opted to keep these equations since they stand for helpful formulas expressing the Euler characteristics of rank $1$ and $2$ bundles on $S$.

Now we analyze curves for which the canonical model lies on a smooth threefold scroll $S_{mnk}$. Since the scroll is nonsingular, $m,n,k\geq 1$, thus, by (\ref{equnnn}), the ambient dimension is at least $5$, and hence the genus of the curve is greater or equal to $6$. As in the surface scroll case, we will set $S_m:=S_{mnk}$ ($m\leq n\leq k$) because the parameter $m$ will be analyzed, though $n,k$ are not determined by $m$, but just the sum of them. 

\begin{teo}
\label{thmtre}
 Let $C$ be a non-Gorenstein curve of genus $g\geq 6$, whose canonical model $C'$ lies on a smooth threefold scroll $S_m$ as a local complete intersection of $(u,v)$-type. Let $\ell$ be the generic number of points in the intersection of $C'$ and a member of a pencil of planes on $S_m$. Then the following hold:
\begin{itemize}
\item[(i)] $v=-(g-5)$ if and only if $\ell=2$ and $C'$ is elliptic. Otherwise
$$
\ell=\frac{(u-4)(2g-2-\eta)+\eta+2\mu}{5-g-v}
$$
\item[(ii)] if $\ell=1$ then $C'\cong\pum$ and, in particular, $C$ is rational and the singular points of $C$ are non-Gorenstein. Moreover, $C$ is nearly Gorenstein if and only if $v=2$.
\item[(iii)] if $\ell=2$ and $C'$ is not elliptic, then either $C'\cong\pum$ with $v=-(g-4)$ or else $v>-(g-5)$. Moreover, $C$ is nearly Gorenstein if and only if $v=3-\eta$; in particular, it is also Kunz if and only if $v=2$.
\item[(iv)] if $\ell=3$, then $v=-(g-5)-((\eta+2\mu)/3)$; in particular, $C$ is Kunz with just one non-Gorenstein point if and only if $v=-(g-4)$.
\item[(v)] if $\ell=4$ then $v=-(g-5)-((\eta+2\mu)/4)$ or $v=-(g-5)-((g+\mu+1)/2)$; in the first case holds: $C$ cannot be Kunz with just one non-Gorenstein point, $m\geq (4(g-5)+\eta+2\mu)/16$, and it generalizes the Gorenstein case.
\item[(vi)] if $\ell\geq 5$ then
$$
m\geq \frac{\ell(g-5)+(\sqrt{2\ell}-4)(2g-2-\eta)+\eta +2\mu}{\ell(\ell+1)}
$$
\end{itemize}
\end{teo}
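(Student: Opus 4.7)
My plan rests on a single master equation, obtained by applying formula (\ref{equgne}) to $X=C'$. Substituting $\deg(C')=2g-2-\eta$ from (\ref{equdcl}), $p_a(C')=g-\eta-\mu$, and $N=g-1$, and rearranging, I would arrive at the master relation
\begin{equation*}
\ell\,(5-g-v) = (u-4)(2g-2-\eta) + \eta + 2\mu \tag{$\star$}
\end{equation*}
Each of items (i)--(v) will then follow by specialising either $v$ or $\ell$ in $(\star)$ and running a case analysis on the integer parameter $u$, aided by the elementary constraints $\eta,\mu\geq 1$ (non-Gorenstein), $g-\eta-\mu\geq 0$, and $g\geq 6$.

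For (i), I would set $v=5-g$ to kill the left side of $(\star)$, yielding $(u-4)(2g-2-\eta)=-(\eta+2\mu)<0$; positivity of $\deg C'$ forces $u\leq 3$, and the same bounds with $g\geq 6$ rule out $u\leq 2$. This leaves $u=3$ and $g=1+\eta+\mu$, so $g'=1$ and $C'$ is elliptic. To pin down $\ell=2$, I would observe that the ruling cuts a $g^1_\ell$ on $C'$ and an elliptic curve has gonality $2$, so $\ell\geq 2$; restricting $E$ to a general fiber $F\cong\pp^2$ gives a rank $2$ bundle with $c_1=3$, $c_2=\ell$ carrying a section with $0$-dimensional zero scheme, and the only admissible value is $\ell=2$. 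The converse direction is a direct substitution in $(\star)$ once $u=3$ is recovered by the same fiber analysis. For (ii), $\ell=1$ makes the ruling restrict to a birational map $C'\to\pum$ which is an isomorphism since $C'$ is LCI, so $C'\cong\pum$, $g'=0$, $C$ is rational, and singular points of $C$ map to smooth points of $C'$ and are therefore non-Gorenstein by Remark \ref{remrel}(iii). Substituting $\ell=1$, $g=\eta+\mu$ and $u=2$ (forced by the unisecant structure of $E$) in $(\star)$ gives $v=\mu+1$, whence $v=2$ if and only if $\mu=1$, that is, $C$ nearly Gorenstein.

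Items (iii), (iv), (v) I would handle by the same recipe. For (iii), $g'=1$ is already excluded by (i), so either $g'=0$ (giving $C'\cong\pum$ and $v=-(g-4)$) or $g'\geq 2$ (giving $v+g-5>0$ in $(\star)$, i.e., $v>-(g-5)$); the nearly Gorenstein criterion $v=3-\eta$ is the substitution $\mu=1$ in $(\star)$. For $\ell=3$ only $u=4$ survives the integrality and positivity constraints, yielding the displayed formula for $v$; Kunz-with-unique-non-Gorenstein then forces $\eta=\mu=1$ and $v=-(g-4)$. For $\ell=4$ both $u=4$ and $u=5$ remain, giving the two alternative expressions for $v$; the obstruction to Kunz-with-unique-non-Gorenstein in the first case is a divisibility check on $(\eta+2\mu)/4$, and the bound $m\geq(4(g-5)+\eta+2\mu)/16$ comes from pairing $v=-(g-5)-(\eta+2\mu)/4$ with the cohomological constraint $v\geq -4m$ derived from (\ref{equhhz}) applied to $\det E=\mathcal{O}_S(4H+vF)$.

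For (vi) I additionally need two numerical bounds on $E$: a cohomological inequality $v\geq -um$ from the Koszul resolution (\ref{equrs3}) combined with the vanishing criterion (\ref{equhhz}) for line bundles on $S_m$, and a fiber-wise Bogomolov-type bound $\sqrt{2\ell}\leq u\leq\ell+1$ for rank $2$ bundles on $\pp^2$ with $c_1=u$, $c_2=\ell$ that admit a section with $0$-dimensional zero scheme. Combining $v\geq -um$ with $v$ extracted from $(\star)$ gives $mu\ell\geq\ell(g-5)+(u-4)(2g-2-\eta)+\eta+2\mu$; I would then bound the left side by $m\ell(\ell+1)$ using $u\leq\ell+1$ and bound the right side below by replacing $u$ with $\sqrt{2\ell}$, arriving at the stated inequality. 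The hard part will be precisely this step: the bundle-theoretic inequalities on the fiber $F\cong\pp^2$ require a careful classification of rank $2$ bundles with a section of codimension $2$, and the characteristic shape of the final bound (with the $\sqrt{2\ell}$ numerator and $\ell(\ell+1)$ denominator) is fixed there.
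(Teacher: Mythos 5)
Your master equation $(\star)$ is exactly the paper's starting identity, and the purely numerical parts of your case analysis (the exclusion of $u\leq 2$ when $v=-(g-5)$, the substitutions giving the formulas for $v$ in items (ii)--(v)) match the paper. The genuine gap is in how you propose to pin down $u$ and $\ell$ and to get the inequalities $v\geq -um$ and $\sqrt{2\ell}\leq u\leq\ell+1$. The paper does \emph{not} work with the abstract rank-$2$ bundle $E$: it reduces (declaredly without loss of generality) to the case where $C'$ is a complete intersection $(aH+bF)\cdot(cH+dF)$ inside $S$, so that $u=a+c$, $\ell=ac$, $v=b+d$ with $a,c\geq 1$ integers. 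Then $\ell=1\Rightarrow u=2$, $\ell=2\Rightarrow u=3$, $\ell=3\Rightarrow u=4$, $\ell=4\Rightarrow u\in\{4,5\}$, the bound $v\geq -m(a+c)$ comes from effectiveness of each factor via (\ref{equhhz}), and $\sqrt{2ac}\leq a+c\leq ac+1$ is elementary arithmetic of positive integers. Your replacement --- a classification of rank-$2$ bundles on a fiber $F\cong\pp^2$ with $c_1=u$, $c_2=\ell$ admitting a section with $0$-dimensional zero scheme --- rests on claims that are false for non-split bundles. The tangent bundle $T_{\pp^2}$ has $c_1=3$, $c_2=3$ and its general section vanishes at three reduced points, so ``$c_1=3$ forces $\ell=2$'' fails, and with it your deduction of $\ell=2$ in item (i) (note that when $v=-(g-5)$ the term $\ell(v+g-5)$ drops out of $(\star)$, so $\ell$ cannot be recovered numerically there). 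Worse, Serre's construction produces, for any $c_1\leq 3$ (where the Cayley--Bacharach condition with respect to $|\oo_{\pp^2}(c_1-3)|$ is vacuous) and any reduced $0$-dimensional $Z$ of any length $\ell$, a rank-$2$ bundle with a section vanishing exactly on $Z$; hence there is no lower bound of the form $\sqrt{2c_2}\leq c_1$, and your derivation of item (vi) collapses. Even granting both inequalities, for $\ell=3$ they only give $u\in\{3,4\}$, not $u=4$.

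In short: the skeleton (master equation plus case analysis) is right, but every step where you determine $u$ from $\ell$, or bound $v$ and $m$, must be routed through the splitting $E=\oo_S(aH+bF)\oplus\oo_S(cH+dF)$ as the paper does, not through fiberwise bundle theory on $\pp^2$. If you want to avoid the paper's ``without loss of generality'' reduction, you would need genuinely new input (e.g.\ Cayley--Bacharach constraints coming from the nondegeneracy of $C'$), and that is not supplied in the proposal.
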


\begin{proof}
Since the scroll is nonsingular and the planes on a pencil of $S_m$ are precisely the fibers, then $\ell$ agrees with the intersection number $C'\cdot F$ priorly defined. Taking $X=C'$, $p_a(X)=g-\eta-\mu$, $\deg(X)=2g-2-\eta$, and $N=g-1$ in (\ref{equgne}) we have
\begin{equation}
\label{equgen}
(u-4)(2g-2-\eta)+\eta+2\mu+\ell(v+g-5)=0
\end{equation}
If $v\neq-(g-5)$, the above equality provides the formula of item (i). 

For the remainder, in order to obtain conditions on $u$ and $v$, we will assume, without loss in generality, that the canonical model is a complete intersection given by $C'=(aH+bF)(cH+dF)$ with $u=a+c$ and $v=b+d$. Since the cycles need to be effective, from (\ref{equhhz}) we have that $a,c\geq 0$; besides, $C'$ is non-degenerate, hence $a,c>0$ and also if $b<0$ (resp. $d<0$) then $a\geq 2$ (resp. $c\geq 2$).

Now assume $v=-(g-5)$, then from (\ref{equgen}) we get
$$
\eta+2\mu=(4-u)(2g-2-\eta)
$$
If $\eta=\mu=0$ we are in the classical (Gorenstein) case, i.e., the canonical curve is given  by 
$C=C'=(2H+bF)(2H+dF)$ with $u=4$ and $v=b+d=-(g-5)$. Otherwise, $u<4$ since the left hand side of the equation is strictly positive. But from what was said above, $u\geq 2$ and if equality holds, then $a=c=1$, which implies both $b$ and $d$ are positive, and so would be $v$, which is not the case here. Therefore $u=3$, thus $\ell=ac=2$ by (\ref{equcll}) and 
$$
g=1+\eta+\mu
$$
that is, $C'$ is of genus $g'=1$, so necessity is proved in the claim of (i). We leave sufficiency for the analisys of the case where $\ell=2$ right away.

Now assume $\ell=1$. First off, the planes on $S$ cut out a $g_1^1$ on $C'$, so $C'\cong\pum$ and, in particular, $C$ is rational, and its singular points are non-Gorenstein. Besides, since $g'=0$ one may write $g=\eta+\mu$, and since $\ell=1$ one should take $u=2$. Replacing this in (\ref{equgen}), we get
$$
v-\mu-1=0
$$
and item (ii) follows.

If $\ell=2$ then $u=3$ and (\ref{equgen}) yields
$$
\eta +\mu+v-4=0
$$
So one may write $v=-(g-g'-5)$. If $g'=0$ then $C'\cong\pum$ with $v=-(g-4)$; if $g'=1$, i.e., $C'$ is elliptic, then we have sufficiency in the claim of item (i). Otherwise, if $g'\geq 2$, then $v>-(g-5)$. Now, from (\ref{equgen}), $C$ is nearly Gorenstein if and only if $v=3-\eta$ and also Kunz if and only if $v=2$.

If $\ell=3$ then $u=4$ and so
$$
\eta +2\mu+3(v+g-5)=0
$$
which immediately implies the equation (iv). The last sentence of the item follows from the fact that $C$ is Kunz with just one non-Gorenstein point if and only if $\mu=\eta=1$.

If $\ell=4$, either $u=4$ or else $u=5$. The first case occurs when $a=c=2$ so it generalizes the canonical Gorenstein case of tetragonal curves, as mentioned above. The equation (\ref{equgen}) gives the relation
$$
\eta +2\mu+4(v+g-5)=0
$$
from where the value of $v$ comes. It does not allow $\eta=\mu=1$ otherwise $v$ would not be integer, so $C$ cannot be Kunz with one non-Gorenstein point. From (\ref{equhhz}), we deduce that $v=(b+d)\geq -m(a+c)=-mu$ and the lower bound for $m$ can  easily be found. The second case provides the relation
$$
2g-2+2\mu+4(v+g-5)=0
$$ 
which gives $v$.

If $\ell\geq 5$, then $v$ is always computed the same way by (\ref{equgen}), that is,
$$
v= -(g-5)-\frac{(u-4)(2g-2-\eta)+\eta +2\mu}{\ell}
$$
where none of the terms vanish. The lower bound for $m$ immediately comes from the fact that $v\geq -mu$, $u=a+c$, $\ell=ac$ and the general relation for positive integers $\sqrt{2ac}\leq a+c\leq ac+1$.
\end{proof}


\section{Rational Curves on Threefold Scrolls}

In this section we study rational monomial curves with canonical models lying on a threefold scroll. We start by listing all non-Gorenstein curves of this type with genus at most $8$,  with a unique singular point, for which the canonical model is not contained on a surface scroll.

\begin{center}
\begin{tabular}{|c|c|c|c|}
\hline
\multicolumn{4}{|c|}{\textbf{genus 6}}\\
\hline
$C$ &  gn &   $C'$  & $mn$\\ 
\hline
$(1:t^5:t^6:t^{13}:t^{14})$       & 4 &$(1:t^2:t^5:t^6:t^7:t^8)$ & 00\\
$(1: t^4:t^7:t^8:t^9)$                & 4 & $(1:t^4:t^5:t^7:t^8:t^9)$ & $01$\\
$(1:t^4:t^7:t^{10}:t^{12}:t^{13})$       & 4 &$(1:t^3:t^4:t^6:t^7:t^8)$ & $01$\\
$(1:t^5:t^7:t^{8}:t^{11}:t^{12}:t^{13})$& 4 &$(1:t^3:t^5:t^6:t^7:t^8)$ & 01\\ 
\hline
\multicolumn{4}{|c|}{\textbf{genus 7}}\\
\hline
$(1:t^4:t^{10}:t^{11}: t^{12}:t^{13})$    & 4 &$(1:t^2:t^3:t^4:t^6:t^7:t^8)$    &   00 \\
$(1:t^5:t^7:t^{11}:t^{12}:t^{13})$       & 4 &$(1:t:t^3:t^5:t^6:t^7:t^8)$   &   01    \\
$(1:t^5:t^8:t^{11}:t^{12}:t^{13}:t^{14})$& 4 &$(1:t^2:t^3:t^5:t^6:t^7:t^8)$  &     01 \\
$(1:t^6:t^8:t^{9}:t^{11}:t^{12}:t^{13})$ & 4 &$(1:t^3:t^5:t^6:t^7:t^8:t^{9})$   &  01 \\ 
$(1:t^5:t^7:t^{9}:t^{13}:t^{14})$        & 4 &$(1:t^3:t^5:t^7:t^8:t^9:t^{10})$    & 01\\
$(1:t^5:t^8:t^{9}:t^{11}:t^{12})$        & 4 &$(1:t^4:t^5:t^7:t^8:t^9:t^{10})$  &  01 \\
$(1:t^6:t^8:t^{9}:t^{10}:t^{12}:t^{13})$ & 4 &$(1:t^4:t^6:t^7:t^8:t^9:t^{10})$ & 01   \\
$(1:t^5:t^8:t^{9}:t^{10}:t^{11})$        & 4 &$(1:t^5:t^6:t^8:t^9:t^{10}:t^{11})$ & 01 \\
$(1:t^4:t^7:t^{12}:t^{13})$              & 4 &$(1:t:t^4:t^5:t^7:t^8:t^9)$  &   02    \\
$(1:t^4:t^9:t^{14}:t^{15})$        & 4 &$(1:t^3:t^4:t^5:t^7:t^8:t^9)$     &  02 \\
$(1:t^4:t^9:t^{10}:t^{11}:t^{15})$       & 4 &$(1:t^4:t^5:t^6:t^8:t^9:t^{10})$   &  02\\
$(1:t^5:t^7:t^8)$                 & 4 &$(1:t^2:t^5:t^7:t^8:t^9:t^{10})$   &  11\\
$(1:t^6:t^7:t^{8}:t^{9}:t^{10})$        & 4 &$(1:t^2:t^6:t^7:t^8:t^9:t^{10})$    & 11\\
\hline
\multicolumn{4}{|c|}{\textbf{genus 8}}\\
\hline
$(1:t^4:t^{10}:t^{13}:t^{14}:t^{15})$      & 4 &$(1:t^2:t^4:t^5:t^6:t^8:t^{9}:t^{10})$  & 00   \\
$(1:t^6:t^{8}:t^{11}:t^{13}:t^{14}:t^{15})$       & 4 &$(1:t:t^3:t^5:t^6:t^7:t^8:t^9)              $ & 01\\
$(1:t^6:t^{8}:t^{9}:t^{12}:t^{13})$               & 4 &$(1:t:t^4:t^6:t^7:t^8:t^9:t^{10})$ & 01\\
$(1:t^6:t^{9}:t^{10}:t^{13}:t^{14}:t^{16}:t^{17})$& 4 &$(1:t^3:t^4:t^6:t^7:t^{8}:t^{9}:t^{10})$  & 01\\
$(1:t^6:t^{8}:t^{10}:t^{11}:t^{14}:t^{15})$       & 4 &$(1:t^4:t^6:t^8:t^9:t^{10}:t^{11}:t^{12})$ & 01\\
$(1:t^6:t^{9}:t^{10}:t^{11}:t^{13}:t^{14})$       & 4 &$(1:t^5:t^6:t^8:t^9:t^{10}:t^{11}:t^{12})$  & 01\\
$(1:t^6:t^{9}:t^{10}:t^{11}:t^{12}:t^{13})$       & 4 &$(1:t^6:t^7:t^9:t^{10}:t^{11}:t^{12}:t^{13})$  & 01\\
$(1:t^6:t^{9}:t^{11}:t^{14}:t^{15}:t^{16})$& 4 &$(1:t^2:t^3:t^5:t^6:t^{7}:t^{8}:t^{9})$  & 02\\
$(1:t^5:t^9:t^{12}:t^{13}:t^{15}:t^{16})$  & 4 &$(1:t^3:t^4:t^5:t^7:t^{8}:t^{9}:t^{10})$    & 02\\
$(1:t^4:t^{7}:t^{16}:t^{17})$              & 4 &$(1:t^3:t^4:t^7:t^8:t^{10}:t^{11}:t^{12})$  & 02\\
$(1:t^5:t^{8}:t^{14}:t^{16}:t^{17})$       & 4 &$(1:t^3:t^5:t^6:t^8:t^{9}:t^{10}:t^{11})$  & 02\\
$(1:t^5:t^{9}:t^{11}:t^{13}:t^{16}:t^{17})$& 4 &$(1:t^4:t^5:t^6:t^8:t^{9}:t^{10}:t^{11})$  & 02\\
$(1:t^5:t^{6}:t^{13}:t^{14})$              & 4 &$(1:t^4:t^5:t^6:t^9:t^{10}:t^{11}:t^{12})$ & 02\\
$(1:t^5:t^{9}:t^{11}:t^{12})$       & 4 &$(1:t^5:t^6:t^7:t^9:t^{10}:t^{11}:t^{12})$  & 02\\
$(1:t^5:t^{6}:t^{12}:t^{13})$              & 4 &$(1:t^5:t^6:t^7:t^{10}:t^{11}:t^{12}:t^{13})$ & 02\\
$(1:t^4:t^{9}:t^{14}:t^{15})$       & 4 &$(1:t:t^4:t^5:t^6:t^8:t^{9}:t^{10})$     &   11\\
$(1:t^5:t^{7}:t^{13}:t^{15}:t^{16})$       & 4 &$(1:t^2:t^3:t^5:t^7:t^{8}:t^{9}:t^{10})$   &   11 \\
$(1:t^5:t^{7}:t^{8}:t^{9})$               & 4 &$(1:t^2:t^5:t^7:t^9:t^{10}:t^{11}:t^{12})$   &  11\\
$(1:t^4:t^{10}:t^{11}:t^{16}:t^{17})$      & 4 &$(1:t^4:t^6:t^7:t^8:t^{10}:t^{11}:t^{12})$ & 11\\
$(1:t^4:t^{9}:t^{10}:t^{11})$              & 4 &$(1:t^4:t^7:t^8:t^9:t^{11}:t^{12}:t^{13})$ & 11 \\
$(1:t^4:t^{11}:t^{13}:t^{14})$      & 4 &$(1:t:t^3:t^4:t^5:t^7:t^8:t^{9})$     &    12  \\
$(1:t^5:t^{8}:t^{12}:t^{13}:t^{14})$       & 4 &$(1:t^2:t^4:t^5:t^7:t^8:t^{9}:t^{10})$    &  12\\ 
\hline
\end{tabular}
\end{center}

Note that the gonality of each such a curve is $4$, which, as in Section 3, is not a particular fact as one can see from the statement of the result below.

\begin{teo}
\label{thmsc3}
The canonical model of a rational monomial curve $C$, with a unique singular point, lies on a $3$-fold scroll but not on surface scroll if and only if $gon(C)=4$.
\end{teo}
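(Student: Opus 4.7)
The plan is to combine with Theorem~\ref{scroll2} and reduce to the equivalent statement: for a rational monomial $C$ with a unique singular point, $C'$ lies on a threefold scroll if and only if $\gon(C) \leq 4$. The direction $\gon(C) \leq 4 \Rightarrow C'$ on a threefold scroll follows from the non-Gorenstein analog of Schreyer's construction (the fibers of a $g^{1}_{4}$ span $2$-planes in $\pp^{g-1}$ that sweep out a threefold scroll containing $C'$), and I would treat it as the ``converse already known'' in the spirit of Theorem~\ref{scroll2}. My main task is the opposite direction.

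Assume $C'$ lies on a threefold scroll. By Proposition~\ref{prprat} write $C' = (1:t^{b_1}:\ldots:t^{b_{g-1}})$ with $A := \{0, b_1, \ldots, b_{g-1}\} = \kk_P^*$; by Lemma~\ref{pa}, $A$ partitions into three arithmetic progressions of a common difference $r$, and equivalently so does the gap set $G := \gamma - A = \nn \setminus \sss$. If $\alpha \leq 4$, take $\aaa := \oo_C\langle 1, t^{\alpha}\rangle$: since $t^{\alpha} \in \op$, one has $\aaa_P = \op$, so $\deg \aaa = \alpha \leq 4$ and $\gon(C) \leq 4$. Otherwise $\alpha \geq 5$, so $1, 2, 3, 4 \in G$; these four consecutive integers cannot be distributed into three APs with common difference $r \geq 4$, so $r \leq 3$.

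For each $r \in \{1, 2, 3\}$ consider $\aaa := \oo_C\langle 1, t^{r}\rangle$. A direct computation (in the spirit of the proof of Theorem~\ref{scroll2}) gives
$$
\deg \aaa \;=\; r + \#\{g \in G : g \geq r,\ g - r \in \sss\}.
$$
Within an AP of $G$ with common difference $r$, any element $g$ other than the smallest has $g - r$ in the same AP, hence in $G$, so it contributes nothing; the count is therefore bounded by the number of ``first elements'' $f_1 < f_2 < f_3$ of the three APs that satisfy $f_i - r \in \sss$. Since $1 \in G$ forces $f_1 = 1$, the value $f_1 - r \leq 0$ excludes $f_1$ whenever $r \geq 2$. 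For $r = 3$, the residues $\{1, 2, 3, 4\} \bmod 3 = \{0, 1, 2\}$ realize all three classes, pinning $(f_1, f_2, f_3) = (1, 2, 3)$; only $f_3 - 3 = 0 \in \sss$ contributes, so $\deg \aaa = 4$. For $r = 2$, $f_1$ drops out and at most $f_2, f_3$ can contribute, giving $\deg \aaa \leq 4$. For $r = 1$, $f_1 - 1 = 0 \in \sss$ so $f_1$ is counted, but the total cannot exceed three, so again $\deg \aaa \leq 4$. In every case $\gon(C) \leq 4$, closing the argument.

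The main technical step is the degree formula for $\aaa = \oo_C\langle 1, t^{r}\rangle$ together with the bookkeeping of ``first elements'' of the AP-partition of $G$. The cleanest case is $r = 3$, where residues modulo $3$ rigidly determine how $\{1, 2, 3, 4\}$ distributes among the three APs; the case $r = 2$ requires the parity-based analysis of how the odd gaps $1, 3$ and the even gaps $2, 4$ split among the APs, while $r = 1$ reduces to an elementary count of intervals.
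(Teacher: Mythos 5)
Your treatment of the direction ``$C'$ lies on a threefold scroll $\Rightarrow \gon(C)\leq 4$'' is correct and in fact tidier than the paper's: the paper handles $r=1,2,3$ separately (and excludes $r\geq 4$ via $\gamma-1,\ldots,\gamma-4\in\kk$) by explicitly writing down $\sss^*$ and $\aaa_P$ in each case, whereas your observation that only the \emph{first} element of each arithmetic progression of the gap set can contribute to $\#\{g\in G:\ g-r\in\sss\}$ yields the bound $\deg\aaa\leq r+3$, and with the residue/parity pinning of the first elements, $\deg\aaa\leq 4$ uniformly. The degree formula you use is exactly the one the paper records in its proof, and combining with Theorem \ref{scroll2} to upgrade $\gon(C)\leq 4$ to $\gon(C)=4$ is fine.

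The gap is in the other direction. You assert that $\gon(C)=4$ implies $C'$ lies on a threefold scroll via ``the non-Gorenstein analog of Schreyer's construction'' and treat it as the known converse, in parallel with Theorem \ref{scroll2}. But the parallel fails: in Theorem \ref{scroll2} the converse genuinely is prior work (\cite{M1,M2}), whereas no tetragonal analog exists for non-Gorenstein curves. The sweeping-planes construction of \cite{Sc} needs the $g^1_4$ to be an invertible pencil on a canonically embedded curve; here the sheaf computing the gonality, $\oo_C\langle 1,t^n\rangle$, has a non-removable base point at the singular point $P$ (it is not free there), its ``fibers'' do not move in a pencil of disjoint $2$-planes through $C'$, and $C'$ is not a canonical embedding of $C$ in the first place. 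The paper flags precisely this obstruction in its closing paragraph (the difficulty of adapting \cite{Sc,CE,BS} when $\ww$ is not a bundle), and this implication is the bulk of its proof: it invokes \cite[p. 10]{FM} to write the gonality pencil as $\oo\langle 1,t^n\rangle$ for some nonzero $n\in\zz$, shows $-4\leq n\leq 4$ and reduces negative $n$ to positive, and for each $n\in\{1,2,3,4\}$ pins down the block structure of $\sss$ far enough to exhibit an explicit partition of $\kk^*$ into three arithmetic progressions of common difference $n$, then applies Lemma \ref{pa}. Without that analysis, or an actual proof of a Schreyer-type statement in the non-Gorenstein setting, your argument establishes only one implication of the theorem.
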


\begin{proof} Let $\cl$, $\sss$, $\kk$ and $A$ be as in the proof of Theorem \ref{scroll2}. Assume first that $\cl$ lies on a $3$-fold scroll but not on a surface scroll. By Lemma \ref{pa}, there is a partition of $A$ into three subsets, say $A_1$, $A_2$ and $A_3$ where the first has $0$ as an element and the third has $\gamma-1$, and all three form an arithmetic progression with the same common difference. Denote by $r$ is this common difference. If $r=1$,  then necessarily
\begin{align*}
A_1=&\{0,1,2,\ldots,a\}\\
A_2=&\{b,b+1,b+2,\ldots,c\} \\
A_3=&\{\gamma-\alpha+1,\gamma-\alpha+2,\gamma-\alpha+3,\ldots,\gamma-1\}
\end{align*}
and one can check that
$$
\sss=\{0,\alpha,\alpha+1,\alpha+2\ldots,\gamma-c-1,\gamma-b+1,\gamma-b+2,\gamma-b+3,\ldots,\gamma-a-1,\beta \rightarrow\}
$$
Now set
$$
\aaa:=\oo_C\langle 1,t\rangle
$$
We have that $\aaa$ has degree 1 at $\infty$ and 0 elsewhere but $P$. Moreover,
$$
\aaa_P=kt\oplus kt^{\gamma-c}\oplus kt^{\gamma-a}\oplus\op
$$
hence $\dim(\aaa_P/\oo_P)=3$ and $\deg(\aaa)=4$. Hence $\gon(C)\leq 4$. It has to be $4$, because otherwise $C'$ would lie on a surface scroll by Theorem \ref{scroll2}.

If $r=2$, write
\begin{align*}
A_1&=\{0,2,4,\ldots,a\}\\
A_2&=\{b,b+2,b+4,\ldots,c\}\\
A_3&=\{d,d+2,d+4\ldots,e\}
\end{align*}
with $b<d$. We may assume $\alpha\geq3$. But if $\alpha\geq 3$, then $\gamma-1, \gamma-2 \in\kk$. We will consider two cases.

\noindent{\bf Case 1.} $\gamma$ is even. \\
If so, $e=\gamma-1$. If $a=\gamma-2$, then all even numbers are in $\kk$, $b$ and $d$ are odd, and, also, all positive even numbers smaller than $\gamma$ are not in $\sss$ since the symmetric element $\gamma-i$ is even for any even integer $i$; so $\alpha$ is odd. Besides $\alpha>\gamma/2$ since $2\alpha$ is even, and no positive even numbers are in $\sss^*$, so $2\alpha>\beta$. Moreover, $\gamma-\alpha$ is odd and is not in $\kk$, so $d=\gamma-\alpha+2$ and $c<\gamma-\alpha$. One sees that
\begin{equation}
\label{equss3}
\sss^*=\{0,\alpha,\alpha+2,\alpha+4,\ldots,\gamma-c-2,\gamma-b+2,\gamma-b+4,\gamma-b+6,\ldots,\beta-2,\beta\}
\end{equation}

Now set
\begin{equation}
\label{equaa2}
\aaa:=\oo_C\langle 1,t^2\rangle
\end{equation}
We have that $\aaa$ has degree $2$ at $\infty$ and 0 elsewhere but $P$. Moreover,
$$
\aaa_P=kt^2\oplus kt^{\gamma-c}\oplus \op
$$
hence $\dim(\aaa_P/\oo_P)=2$ and $\deg(\aaa)=4$. Therefore $\gon(C)\leq 4$ which, as seen above, is enough for our purposes.

If $a\neq\gamma-2$, all odd numbers are in $\kk$, $b$ is even and $d$ is odd, and, also, all positive odd numbers smaller than $\gamma$ are not in $\sss$, so $\alpha$ is even. We claim that $\alpha>\gamma/2$. In fact, otherwise, since $\gamma-\alpha$, which is not in $\kk$, is even, implies $\alpha\in A_1$; besisdes, $\gamma-\alpha<n\alpha<\gamma-1$ for a certain integer $n$, so $\gamma-n\alpha<\alpha$ is not in $\kk$, which contradicts the fact that $\alpha\in A_1$ and this proves the claim.  Hence
\begin{equation}
\label{equs13}
\sss^*=\{0,\alpha,\alpha+2,\alpha+4,\ldots,\gamma-a-2,\beta\}
\end{equation}
Taking $\aaa$ as in (\ref{equaa2}), we have that $\aaa$ has degree $2$ at $\infty$ and 0 elsewhere but $P$. Moreover,
$$
\aaa_P=kt^2\oplus kt^{\gamma-a}\oplus \op
$$
hence $\dim(\aaa_P/\oo_P)=2$ and $\deg(\aaa)=4$.

\noindent{\bf Case 2.} $\gamma$ is odd. \\
If $a=\gamma-1$, then all even numbers are in $\kk$, $b$ and $d$ are odd, and, also, all positive odd numbers smaller than $\gamma$ are not in $\sss$; so $\alpha$ is even. In this case there is no restriction to $\alpha$ compared to $\gamma/2$ and one can check that $\sss^*$ can be written exactly as in (\ref{equss3}).

If $a\neq\gamma-1$, all odd numbers are in $\kk$, $b$ is even and $d$ is odd, and, also, all positive even numbers smaller than $\gamma$ are not in $\sss$; so $\alpha$ is odd. Besides $\alpha>\gamma/2$ since $2\alpha>\gamma$, and $\sss^*$ can be written exactly as in (\ref{equs13}). Therefore one may use (\ref{equaa2}) to compute gonality $4$.

To prove sufficiency for $r= 3$, we may assume $\alpha\geq 4$. In fact, if $\alpha=1$ the $C=\pum$ which has gonality $1$, if $\alpha=2$, $C$ is hyperelliptic, and if $\alpha=3$, then $\oo\langle 1,t^3\rangle$ computes gonality $3$ for $C$. In all cases, $C'$ lie on a surface scroll owing to Theorem \ref{scroll2}. Therefore, $\gamma-1,\gamma-2,\gamma-3$ are in $\kk$ and belong to different subsets of the partition of $A$ and, in particular, all multiples of $3$ smaller than $\gamma$ are in $\kk$. We have three options: $\gamma=3n$, $\gamma=3n+1$ or $\gamma=3n+2$ for $n\in\mathbb{N}$. If $\gamma=3k$, the symmetric of a multiple of $3$ is a multiple of $3$ as well, thus no multiples of $3$ are in $S^*$. If $b$ (resp. $d$) is the first positive integer in $\sss$ congruent to $1$ (resp. $2$) mod $3$, it is easily seen that
$$
S^*=\{0,b,b+3,b+6,\ldots,\gamma-2\}\cup\{d,d+3,d+6,\ldots,\gamma-1\}\cup\{\beta\}
$$
Set $\aaa:=\oo_{C}\langle 1,t^3\rangle$, then $\aaa$ has degree $0$ elsewhere but at $P$ and $\infty$, where it has degree $3$. Besides $\aaa_P=kt^3\oplus \op$, so $\aaa$ has degree $1$ at $P$ and $\deg(\aaa)=\gon(C)=4$.

If $\gamma=3n+1$ or $\gamma=3n+2$ for $n\in\mathbb{N}$, the proof is analogous.

Now, if $r>3$, we may assume $\alpha\geq5$, because if $\alpha=4$, then $\aaa:=\oo_{C}\langle 1,t^4\rangle$ has degree $4$ and that is what we need. So if $\alpha\geq 5$, then $\gamma-1,\gamma-2,\gamma-3,\gamma-4\in\kk$ and they are in different subsets of the partition of $A$, but this cannot happen.

\medskip

Conversely, assume $\gon(C)=4$ and let us prove that $C'$ lies on a threefold scroll but not on a surface scroll. By \cite[p. 10]{FM}, the gonality of $C$ is computed by a sheaf 
$$
\aaa:=\mathcal{O}\langle 1,t^n\rangle
$$ 
for some nonzero $n\in\zz$. Note that
\begin{equation}
\label{equdin}
\deg_{\infty}\aaa=
\begin{cases}
n  & \text{if}\ n>0 \\ 
0  & \text{if}\ n\leq 0 
\end{cases}
\end{equation}
besides,
\begin{equation}
\label{equdep}
\deg_P(\aaa)=\#(v(\aaa_P)\setminus\sss)=\#\{s\in\sss\ |\ s+n\not\in\sss\}
\end{equation}
and the degree of $\aaa$ vanishes elsewhere. In particular, if $n$ is positive, then $1\leq n\leq 4$ since $C$ has gonality $4$.

For the remainder we will analyze the possibilities for $\sss$ according to each $n$. For this, we will split the semigroup into blocks of consecutive integers, that is, write 
$$
{\rm S}=\{0\}\cup B_1\cup\ldots\cup
B_b\cup\{n\in\mathbb{N}\,|\,n\geq\beta\}
$$  
where 
$$
B_i=\{s_i,s_i+1,\ldots, s_i+l_i\}
$$ 
with $s_i+l_i+1\not\in\sss$ and also $s_i<s_j$ if $i<j$. That is, $B_i$ is the $i$-th
block of consecutive integers in ${\rm S}$, and $b$ is the number of blocks of positive integers in $\sss$ smaller than the conductor $\beta$.

\noindent{\bf Case 1.} $n=1$. \\
If so, $\aaa=\mathcal{O}\langle 1,t\rangle$, $\deg_{\infty}(\aaa)=1$ and thus $\deg_P(\aaa)=3$. From (\ref{equdep}) it is easily seen that $\deg_P(\aaa)=b+1$, so $b=2$, that is, $\sss$ has two blocks.

To compute $\kk^*$ by means of $\sss^*$ we draw a picture in the following way: 
\begin{itemize}
\item[(a)] each number from $0$ to $\gamma$ corresponds to a circle;
\item[(b)] in the first row, one goes from $0$ to $\lfloor\gamma/2\rfloor$ ordered from left to right; 
\item[(c)] in the second row, one goes from $\lceil\gamma/2\rceil$ to $\gamma$ ordered from right to left;
\item[(d)] the elements of $\sss$ are black, the ones of $\kk\setminus\sss$ are double-circled, and the remaining numbers are white.
\end{itemize}
 In other words, the picture is sketched in a way that $a$ and $\gamma-a$ are in the same column, for any $a$, so that $\kk^*$ can be easily computed. Just note also that $a$ and $\gamma-a$ cannot be both in $\sss$, otherwise $\gamma$ would be in $\sss$, which cannot happen. 

There are two possibilities for $\sss^*$ (and $\kk^*$). The first one is

\begin{picture}(40,10)
\put(10,0){\circle*{5}}
\put(8,-10){\scriptsize$0$}
\put(30,0){\circle{5}}
\put(30,0){\circle{2}}
\put(40,0){\circle{0.7}}
\put(50,0){\circle{0.7}}
\put(60,0){\circle{0.7}}
\put(70,0){\circle{5}}
\put(70,0){\circle{2}}
\put(90,0){\circle{5}}
\put(100,0){\circle{0.7}}
\put(110,0){\circle{0.7}}
\put(120,0){\circle{0.7}}
\put(130,0){\circle{5}}
\put(150,0){\circle{5}}
\put(150,0){\circle{2}}
\put(160,0){\circle{0.7}}
\put(170,0){\circle{0.7}}
\put(180,0){\circle{0.7}}
\put(190,0){\circle{5}}
\put(190,0){\circle{2}}
\put(210,0){\circle*{5}}
\put(208,-10){\scriptsize$s_1$}
\put(220,0){\circle{0.7}}
\put(230,0){\circle{0.7}}
\put(240,0){\circle{0.7}}
\put(250,0){\circle*{5}}
\put(238,-10){\scriptsize$s_1+l_1$}
\put(270,0){\circle{5}}
\put(270,0){\circle{2}}
\put(280,0){\circle{0.7}}
\put(290,0){\circle{0.7}}
\put(300,0){\circle{0.7}}
\put(310,0){\circle{5}}
\put(310,0){\circle{2}}

\put(10,-20){\circle{5}}
\put(8,-30){\scriptsize$\gamma$}
\put(30,-20){\circle{5}}
\put(30,-20){\circle{2}}
\put(40,-20){\circle{0.7}}
\put(50,-20){\circle{0.7}}
\put(60,-20){\circle{0.7}}
\put(70,-20){\circle{5}}
\put(70,-20){\circle{2}}
\put(90,-20){\circle*{5}}
\put(78,-30){\scriptsize$s_2+l_2$}
\put(100,-20){\circle{0.7}}
\put(110,-20){\circle{0.7}}
\put(120,-20){\circle{0.7}}
\put(130,-20){\circle*{5}}
\put(128,-30){\scriptsize$s_2$}
\put(150,-20){\circle{5}}
\put(150,-20){\circle{2}}
\put(160,-20){\circle{0.7}}
\put(170,-20){\circle{0.7}}
\put(180,-20){\circle{0.7}}
\put(190,-20){\circle{5}}
\put(190,-20){\circle{2}}
\put(210,-20){\circle{5}}
\put(220,-20){\circle{0.7}}
\put(230,-20){\circle{0.7}}
\put(240,-20){\circle{0.7}}
\put(250,-20){\circle{5}}
\put(270,-20){\circle{5}}
\put(270,-20){\circle{2}}
\put(280,-20){\circle{0.7}}
\put(290,-20){\circle{0.7}}
\put(300,-20){\circle{0.7}}
\put(310,-20){\circle{5}}
\put(310,-20){\circle{2}}
\end{picture}

\vspace{4em}

Therefore we can split $\kk^*$ into three subsets with common difference $1$, namely:
\begin{align*}
A_1&=\{0,1,\ldots,\gamma-(s_2+l_2+1)\} \\
A_2&=\{\gamma-s_2+1,\gamma-s_2+2,\ldots,\gamma-(s_1+l_1+1)\}\\
A_3&=\{\gamma-s_1+1,\gamma-s_1+2,\ldots,\gamma-1\}
\end{align*}
so $C'$ lies on a threefold scroll.

The second possibility is

\begin{picture}(40,10)
\put(10,0){\circle*{5}}
\put(8,-10){\scriptsize$0$}
\put(30,0){\circle{5}}
\put(30,0){\circle{2}}
\put(40,0){\circle{0.7}}
\put(50,0){\circle{0.7}}
\put(60,0){\circle{0.7}}
\put(70,0){\circle{5}}
\put(70,0){\circle{2}}
\put(90,0){\circle{5}}
\put(100,0){\circle{0.7}}
\put(110,0){\circle{0.7}}
\put(120,0){\circle{0.7}}
\put(130,0){\circle{5}}
\put(150,0){\circle{5}}
\put(150,0){\circle{2}}
\put(160,0){\circle{0.7}}
\put(170,0){\circle{0.7}}
\put(180,0){\circle{0.7}}
\put(190,0){\circle{5}}
\put(190,0){\circle{2}}
\put(210,0){\circle{5}}
\put(220,0){\circle{0.7}}
\put(230,0){\circle{0.7}}
\put(240,0){\circle{0.7}}
\put(250,0){\circle{5}}
\put(270,0){\circle{5}}
\put(270,0){\circle{2}}
\put(280,0){\circle{0.7}}
\put(290,0){\circle{0.7}}
\put(300,0){\circle{0.7}}
\put(310,0){\circle{5}}
\put(310,0){\circle{2}}

\put(10,-20){\circle{5}}
\put(8,-30){\scriptsize$\gamma$}
\put(30,-20){\circle{5}}
\put(30,-20){\circle{2}}
\put(40,-20){\circle{0.7}}
\put(50,-20){\circle{0.7}}
\put(60,-20){\circle{0.7}}
\put(70,-20){\circle{5}}
\put(70,-20){\circle{2}}
\put(90,-20){\circle*{5}}
\put(78,-30){\scriptsize$s_2+l_2$}
\put(100,-20){\circle{0.7}}
\put(110,-20){\circle{0.7}}
\put(120,-20){\circle{0.7}}
\put(130,-20){\circle*{5}}
\put(128,-30){\scriptsize$s_2$}
\put(150,-20){\circle{5}}
\put(150,-20){\circle{2}}
\put(160,-20){\circle{0.7}}
\put(170,-20){\circle{0.7}}
\put(180,-20){\circle{0.7}}
\put(190,-20){\circle{5}}
\put(190,-20){\circle{2}}
\put(210,-20){\circle*{5}}
\put(198,-30){\scriptsize$s_1+l_1$}
\put(220,-20){\circle{0.7}}
\put(230,-20){\circle{0.7}}
\put(240,-20){\circle{0.7}}
\put(250,-20){\circle*{5}}
\put(248,-30){\scriptsize$s_1$}
\put(270,-20){\circle{5}}
\put(270,-20){\circle{2}}
\put(280,-20){\circle{0.7}}
\put(290,-20){\circle{0.7}}
\put(300,-20){\circle{0.7}}
\put(310,-20){\circle{5}}
\put(310,-20){\circle{2}}
\end{picture}

\vspace{3.5em}

\noindent and again we split $\kk^*$ into three subsets with common difference $1$, which coincide with $A_1,A_2,A_3$ above, 
so $C'$ lies on a threefold scroll once more.

\noindent{\bf Case 2.} $n=2$. \\
If so, $\aaa=\mathcal{O}\langle 1,t^2\rangle$, $\deg_{\infty}(\aaa)=2$ and thus $\deg_P(\aaa)=2$. For the remainder set
$$
E:=\{s+2\not\in\sss\ |\  s\in\sss\}
$$
and recall, from (\ref{equdep}), that
$$
2=\deg_P(\aaa)=\#(E)
$$
Note that $2\in E$. Besides, if there exists a block $B_j=\{s_j,s_j+1,\ldots, s_j+l_j\}\subset\sss^*$ with $l_j\geq 1$, then  $s_j+l_j+1\in E$. Therefore, there is at most one such a block, and we must have $s_j\geq\lceil\gamma/2\rceil$ otherwise, there would exist another block $B_m$ with $l_m=2l_j$. Moreover, $s_i+l_i+2\in\sss$ for all $i$. Finally, $s_1>\lceil\gamma/2\rceil$, otherwise $\gamma\in\sss$, which cannot happen, and $s_1+2i\in\sss$ for every $i\in\nn$. So, if there is a block with more than one element, the generic picture is

\begin{picture}(40,10)
\put(0,0){\circle*{5}}
\put(-2,-10){\scriptsize$0$}
\put(20,0){\circle{5}}
\put(40,0){\circle{5}}
\put(40,0){\circle{2}}
\put(60,0){\circle{5}}
\put(80,0){\circle{5}}
\put(80,0){\circle{2}}
\put(90,0){\circle{0.7}}
\put(100,0){\circle{0.7}}
\put(110,0){\circle{0.7}}
\put(120,0){\circle{5}}
\put(140,0){\circle{5}}
\put(140,0){\circle{2}}
\put(160,0){\circle{5}}
\put(170,0){\circle{0.7}}
\put(180,0){\circle{0.7}}
\put(190,0){\circle{0.7}}
\put(200,0){\circle{5}}
\put(210,0){\circle{0.7}}
\put(220,0){\circle{0.7}}
\put(230,0){\circle{0.7}}
\put(240,0){\circle{5}}
\put(240,0){\circle{2}}
\put(260,0){\circle{5}}
\put(280,0){\circle{5}}
\put(280,0){\circle{2}}
\put(300,0){\circle{5}}
\put(310,0){\circle{0.7}}
\put(320,0){\circle{0.7}}
\put(330,0){\circle{0.7}}
\put(340,0){\circle{5}}
\put(340,0){\circle{2}}

\put(0,-20){\circle{5}}
\put(-2,-30){\scriptsize$\gamma$}
\put(20,-20){\circle*{5}}
\put(40,-20){\circle{5}}
\put(40,-20){\circle{2}}
\put(60,-20){\circle*{5}}
\put(80,-20){\circle{5}}
\put(80,-20){\circle{2}}
\put(90,-20){\circle{0.7}}
\put(100,-20){\circle{0.7}}
\put(110,-20){\circle{0.7}}
\put(120,-20){\circle*{5}}
\put(140,-20){\circle{5}}
\put(140,-20){\circle{2}}
\put(160,-20){\circle*{5}}
\put(148,-30){\scriptsize$s_j+l_j$}
\put(170,-20){\circle{0.7}}
\put(180,-20){\circle{0.7}}
\put(190,-20){\circle{0.7}}
\put(200,-20){\circle*{5}}
\put(198,-30){\scriptsize$s_j$}
\put(210,-20){\circle{0.7}}
\put(220,-20){\circle{0.7}}
\put(230,-20){\circle{0.7}}
\put(240,-20){\circle{5}}
\put(240,-20){\circle{2}}
\put(260,-20){\circle*{5}}
\put(280,-20){\circle{5}}
\put(280,-20){\circle{2}}
\put(300,-20){\circle*{5}}
\put(298,-30){\scriptsize$s_1$}
\put(310,-20){\circle{0.7}}
\put(320,-20){\circle{0.7}}
\put(330,-20){\circle{0.7}}
\put(340,-20){\circle{5}}
\put(340,-20){\circle{2}}

\end{picture}

\vspace{3.5em}

Therefore we can split $\kk^*$ into three subsets with common difference $2$, namely:
\begin{align*}
A_1&=\{0,2,\ldots,\gamma-(s_j+l_j+1)\} \\
A_2&=\{\gamma-s_j+1,\gamma-s_j+3,\ldots,\gamma-1\ \text{or}\ \gamma-2\}\\
A_3&=\{\gamma-s_1+2,\gamma-s_1+4,\ldots,\gamma-1\ \text{or}\ \gamma-2\}
\end{align*}
so $C'$ lies on a threefold scroll.

If all the blocks have just one element, then there exists a unique $s_j$ such that $s_j+2\not\in\sss$, and the generic picture is

\begin{picture}(40,10)
\put(0,0){\circle*{5}}
\put(-2,-10){\scriptsize$0$}
\put(20,0){\circle{5}}
\put(40,0){\circle{5}}
\put(40,0){\circle{2}}
\put(60,0){\circle{5}}
\put(80,0){\circle{5}}
\put(80,0){\circle{2}}
\put(90,0){\circle{0.7}}
\put(100,0){\circle{0.7}}
\put(110,0){\circle{0.7}}
\put(120,0){\circle{5}}
\put(140,0){\circle{5}}
\put(140,0){\circle{2}}
\put(150,0){\circle{0.7}}
\put(160,0){\circle{0.7}}
\put(170,0){\circle{0.7}}
\put(180,0){\circle{5}}
\put(180,0){\circle{2}}
\put(200,0){\circle{5}}
\put(210,0){\circle{0.7}}
\put(220,0){\circle{0.7}}
\put(230,0){\circle{0.7}}
\put(240,0){\circle{5}}
\put(240,0){\circle{2}}
\put(260,0){\circle{5}}
\put(280,0){\circle{5}}
\put(280,0){\circle{2}}
\put(300,0){\circle{5}}
\put(310,0){\circle{0.7}}
\put(320,0){\circle{0.7}}
\put(330,0){\circle{0.7}}
\put(340,0){\circle{5}}
\put(340,0){\circle{2}}

\put(0,-20){\circle{5}}
\put(-2,-30){\scriptsize$\gamma$}
\put(20,-20){\circle*{5}}
\put(40,-20){\circle{5}}
\put(40,-20){\circle{2}}
\put(60,-20){\circle*{5}}
\put(80,-20){\circle{5}}
\put(80,-20){\circle{2}}
\put(90,-20){\circle{0.7}}
\put(100,-20){\circle{0.7}}
\put(110,-20){\circle{0.7}}
\put(120,-20){\circle*{5}}
\put(112,-30){\scriptsize$s_{j+1}$}
\put(140,-20){\circle{5}}
\put(140,-20){\circle{2}}
\put(150,-20){\circle{0.7}}
\put(160,-20){\circle{0.7}}
\put(170,-20){\circle{0.7}}
\put(180,-20){\circle{5}}
\put(180,-20){\circle{2}}
\put(200,-20){\circle*{5}}
\put(198,-30){\scriptsize$s_j$}
\put(210,-20){\circle{0.7}}
\put(220,-20){\circle{0.7}}
\put(230,-20){\circle{0.7}}
\put(240,-20){\circle{5}}
\put(240,-20){\circle{2}}
\put(260,-20){\circle*{5}}
\put(280,-20){\circle{5}}
\put(280,-20){\circle{2}}
\put(300,-20){\circle*{5}}
\put(298,-30){\scriptsize$s_1$}
\put(310,-20){\circle{0.7}}
\put(320,-20){\circle{0.7}}
\put(330,-20){\circle{0.7}}
\put(340,-20){\circle{5}}
\put(340,-20){\circle{2}}

\end{picture}

\vspace{3.5em}

Therefore we can split $\kk^*$ into three subsets with common difference $2$, namely:
\begin{align*}
A_1&=\{0,2,\ldots\ldots,\gamma-s_j-2\ \text{or}\ \gamma-1\ \text{or}\ \gamma-2\} \\
A_2&=\{\gamma-s_{j+1}+2,\gamma-s_{j+1}+4,\ldots\dots,\gamma-s_j-2\ \text{or}\ \gamma-1\ \text{or}\ \gamma-2\}\\
A_3&=\{\gamma-s_1+2,\gamma-s_1+4,\ldots\ldots, \gamma-1\ \text{or}\ \gamma-2\}
\end{align*}
so $C'$ lies on a threefold scroll.

\noindent{\bf Case 3.} $n=3$. \\
If so, $\aaa=\mathcal{O}\langle 1,t^3\rangle$, $\deg_{\infty}(\aaa)=3$ and thus 
$$
1=\deg_P(\aaa)=\#(E:=\{s+3\not\in\sss\ |\  s\in\sss\})
$$
We may assume $\alpha\geq 4$, otherwise $\deg(\aaa)=3$ and $C$ is trigonal. Therefore $3\in E$. This forces that $s+3\in\sss$ for every $s\in\sss\setminus\{0\}$, so the blocks $B_i$ have at most $2$ elements. Let $B_j$ be the first block with $2$ elements. Clearly, $s_j\geq\lceil\gamma/2\rceil$ otherwise, there would exist another block $3$ elements. The generic picture for this case is 

\begin{picture}(40,10)
\put(0,0){\circle*{5}}
\put(-2,-10){\scriptsize$0$}
\put(20,0){\circle{5}}
\put(40,0){\circle{5}}
\put(60,0){\circle{5}}
\put(60,0){\circle{2}}
\put(70,0){\circle{0.7}}
\put(80,0){\circle{0.7}}
\put(90,0){\circle{0.7}}
\put(100,0){\circle{5}}
\put(120,0){\circle{5}}
\put(140,0){\circle{5}}
\put(140,0){\circle{2}}
\put(160,0){\circle{5}}
\put(160,0){\circle{2}}
\put(180,0){\circle{5}}
\put(190,0){\circle{0.7}}
\put(200,0){\circle{0.7}}
\put(210,0){\circle{0.7}}
\put(220,0){\circle*{5}}
\put(218,-10){\scriptsize$s_1$}
\put(240,0){\circle{5}}
\put(240,0){\circle{2}}
\put(260,0){\circle{5}}
\put(270,0){\circle{0.7}}
\put(280,0){\circle{0.7}}
\put(290,0){\circle{0.7}}
\put(300,0){\circle*{5}}
\put(320,0){\circle{5}}
\put(320,0){\circle{2}}
\put(340,0){\circle{5}}

\put(0,-20){\circle{5}}
\put(-2,-30){\scriptsize$\gamma$}
\put(20,-20){\circle*{5}}
\put(40,-20){\circle*{5}}
\put(60,-20){\circle{5}}
\put(60,-20){\circle{2}}
\put(70,-20){\circle{0.7}}
\put(80,-20){\circle{0.7}}
\put(90,-20){\circle{0.7}}
\put(100,-20){\circle*{5}}
\put(120,-20){\circle*{5}}
\put(118,-30){\scriptsize$s_j$}
\put(140,-20){\circle{5}}
\put(140,-20){\circle{2}}
\put(160,-20){\circle{5}}
\put(160,-20){\circle{2}}
\put(180,-20){\circle*{5}}
\put(172,-30){\scriptsize$s_{j-1}$}
\put(190,-20){\circle{0.7}}
\put(200,-20){\circle{0.7}}
\put(210,-20){\circle{0.7}}
\put(220,-20){\circle{5}}
\put(240,-20){\circle{5}}
\put(240,-20){\circle{2}}
\put(260,-20){\circle*{5}}
\put(270,-20){\circle{0.7}}
\put(280,-20){\circle{0.7}}
\put(290,-20){\circle{0.7}}
\put(300,-20){\circle{5}}
\put(320,-20){\circle{5}}
\put(320,-20){\circle{2}}
\put(340,-20){\circle*{5}}

\end{picture}

\vspace{3.5em}

\noindent and we can split $\kk^*$ into three subsets with common difference $3$, namely:
\begin{align*}
A_1&=\{0,3,\ldots,\gamma-1\ \text{or}\ \gamma-2\ \text{or}\ \gamma-3\} \\
A_2&=\{\gamma-s_j+2,\gamma-s_j+4,\ldots,\gamma-1\ \text{or}\ \gamma-2\ \text{or}\ \gamma-3\}\\
A_3&=\{s_1\ \text{or}\ s_{j-1}\pm 1,\ldots,\gamma-1\ \text{or}\ \gamma-2\ \text{or}\ \gamma-3\}
\end{align*}
so $C'$ lies on a threefold scroll. The case without any block with $2$ elements is similar. 

\noindent{\bf Case 4.} $n=4$. \\
If so, $\aaa=\mathcal{O}\langle 1,t^4\rangle$, $\deg_{\infty}(\aaa)=4$ and thus 
$$
0=\deg_P(\aaa)=\#(E:=\{s+4\not\in\sss\ |\  s\in\sss\})
$$
Note again that $\alpha\geq 4$ because otherwise $C$ would be trigonal. So actually $\alpha=4$ since $E=\emptyset$, then every multiple of $4$ is in $\sss$. Let $a$ be such that $\gamma\equiv a\ ({\rm mod}\ 4)$.  Clearly, $\gamma$ is not a multiple of $4$, hence one may write $\{1,2,3\}=\{a,b,c\}$. We have that $\sss^*$ admits at most two elements, say $b'$ and $c'$, for which  $b'\equiv b\ ({\rm mod}\ 4)$ and  $c'\equiv c\ ({\rm mod}\ 4)$. We will consider three cases. The first one is where such $b'$ and $c'$ do not exist. If so, we can split $\kk^*$ into three subsets with common difference $4$, namely:
\begin{align*}
A_1&=\{0,4,8,\ldots,\gamma-a\} \\
A_2&=\{b,b+4,b+8,\ldots,\gamma-c\}\\
A_3&=\{c,c+4,c+8,\ldots,\gamma-b\}
\end{align*}
so $C'$ lies on a threefold scroll.

If there is such a $b'$, but no $c'$ in $\sss^*$, then we can split $\kk^*$ into three subsets with common difference $4$, namely:
\begin{align*}
A_1&=\{0,4,8,\ldots,\gamma-a\} \\
A_2&=\{c,c+4,c+8,\ldots,\gamma-b\}\\
A_3&=\{\gamma-b'+4,\gamma-b'+8,\ldots,\gamma-c\}
\end{align*}
so $C'$ lies on a threefold scroll.

Finally, if there are such $b'$ and $c'$  in $\sss^*$, then we can split $\kk^*$ into three subsets with common difference $4$, namely:
\begin{align*}
A_1&=\{0,4,8,\ldots,\gamma-a\} \\
A_2&=\{\gamma-b'+4,\gamma-b'+8,\ldots,\gamma-c\}\\
A_3&=\{\gamma-c'+4,\gamma-c'+8,\ldots,\gamma-b\}
\end{align*}
so $C'$ lies on a threefold scroll.

Now, if $n<0$, for the sake of convenience, write $n=-m$. Since $\deg_{\infty}\aaa=0$, we have that
\begin{equation*}
4=\deg_P(\aaa)=\#(E:=\{s\in\sss\ |\ s-m\not\in\sss\})
\end{equation*}
Besides, $-m$ and $\gamma$ are in $E$, so
\begin{equation}
\label{equemg}
2=\#(E\setminus\{-m,\gamma\})
\end{equation}
Now consider the set $B:=\{b\in\nn\,|\, \gamma-m+1\leq b\leq\gamma-1\}$. First we claim that
\begin{equation}
\label{equgn3}
\#(B\cap\sss)\geq m-3
\end{equation}
In fact, the integers from $\gamma+1$ to $\gamma+m-1$ are all in $\sss$ since $\gamma+1$ is the conductor of $\sss$. Hence any element of $B$ can be written as $s-m$ for a certain $s\in \sss$; since $\#(B)=m-1$, we may use (\ref{equemg}) to conclude that $\#(B\cap\sss)\geq (m-1)-2$ and the claim follows. On the other hand, if $b\in B\cap\sss$ is such that $m\nmid b$, then clearly $b-rm\in E\setminus\{-m,\gamma\}$ for a certain $r$. Hence we may refine (\ref{equgn3}) as
\begin{equation}
\label{equgnr}
\#(B\cap\sss)\geq m-3+\#(\{b\in B\cap\sss\,|\,m\nmid b\})
\end{equation}
but this clearly implies that $m\leq 4$. Indeed, if $m\geq 5$ then, by (\ref{equgn3}), we have that $\#(B\cap\sss)\geq 2$, so there is a nonmultiple of $m$ in $B\cap\sss$. But by (\ref{equgnr}), actually, $\#(B\cap\sss)\geq 3$, and one finds at least two nonmultiples of $m$ in $B\cap\sss$ so $\#(B\cap\sss)\geq 4$. This yields at least three nonmultiples of $m$ in $B\cap\sss$ and using (\ref{equgnr}) once again we have that $\#(B\cap\sss)\geq m>\#(B)$, which is a contradiction.

Thus $n\geq -4$, and we leave to the reader verifying that the possible semigroups for the cases where $n$ is $-1$, $-2$, $-3$ and $-4$ are, respectively, the same as those corresponding to the case $n$ is $1$, $2$, $3$ and $4$, which were already analyzed.
\end{proof}

The results on rational monomial curves suggest that maybe is worth pursueing the question whether the geometric characterization of gonality by means of scrolls can pass from canonical curves to non-Gorenstein ones. This would certainly involve a careful study of syzygies, though the reader should note the difficulty of adapting the arguments of, for instance,  \cite{Sc,CE,BS} when the dualizing sheaf of the curve fails to be a bundle. Rather, here we've combined two completely different techniques, namely, intersection theory along with semigroup of values, as a somewhat initial approach to this problem.

\end{document}